\providecommand{\U}[1]{\protect\rule{.1in}{.1in}}
\newtheorem{theorem}{Theorem}
\newtheorem{corollary}{Corollary}
\newtheorem{example}{Example}
\newtheorem{proposition}{Proposition}
\newtheorem{remark}{Remark}
\theoremstyle{definition}
\def\({\left(}       \def\){\right)}
\begin{document}

\title{Lower and upper order of harmonic mappings}

\author[H. Arbel\'aez]{Hugo Arbel\'{a}ez}
\address{Escuela de Matem\'aticas, Universidad Nacional de Colombia, Medell\'{\i}n, Colombia}
\email{hjarbela@unal.edu.co}

\author[R. Hern\'andez]{Rodrigo Hern\'andez}
\address{Facultad de Ingenier\'{\i}a y Ciencias, Universidad Adolfo Ib\'a\~nez. Av. Pa\-dre Hurtado 750. Vi\~na del Mar, Chile.} \email{rodrigo.hernandez@uai.cl}

\author[W. Sierra]{Willy Sierra}
\address{Departamento de Matem\'aticas, Universidad del Cauca, Popay\'{a}n, Colombia}
\email{wsierra@unicauca.edu.co}



\thanks{The first author was partially supported by the Universidad Nacional de Colombia, Hermes code 49148. The second author is supported by grant Fondecyt $\#1190756$, Chile. The third author thanks the Universidad del Cauca for providing time for this work through research project VRI ID 5464.}

\maketitle

\begin{abstract}
In this paper, we define both the upper and lower order of a sense-preserving harmonic mapping in $\mathbb{D}$. We generalize to the harmonic case some known results about holomorphic functions with positive lower order and we show some consequences of a function having finite upper order. In addition, we improve a related result in the case when there is equality in a known distortion theorem for harmonic mappings with finite upper order. Some examples are provided to illustrate the developed theory.
\end{abstract}

\textbf{Key words.} Harmonic mapping, Lower order, Upper order, Concave functions, Linearly connected domain, Schwarzian derivative.\\

\textbf{Mathematics subject classification.} 30C45, 30A10, 30C62.\\

\section{Introduction}
In 1964 Pommerenke \cite{Po64} introduces the linear invariance order for a locally univalent holomorphic function defined in the unit disk $\mathbb{D}:=\left\lbrace z: |z|<1 \right\rbrace $ by
\begin{equation*}
\alpha(f):=\sup_{z\in\mathbb{D}}|A_f(z)|,
\end{equation*}
where, as in \cite{CrPo06,Po08},
\begin{equation}\label{Af1}
A_f(z)=\frac{1-|z|^2}{2}\frac{f''(z)}{f'(z)}-\overline{z}.
\end{equation}
This operator has played an important role in the study of both analytic and geometric properties of locally univalent holomorphic functions; it is closely related to (Euclidean) convexity and concavity properties of the function under study, as well as to growth and distortion results, some of which can be obtained  by using the connection between $A_f,$ the second coefficient $a_2=f''(0)/2$ and the notion of linear invariant family of locally univalent analytic functions. An important result in this direction was proved by Pommerenke \cite{Po64}, who showed that $\alpha(f)\geq1$ for all locally univalent holomorphic function $f$ and $\alpha(f)=1$ exactly if $f$ is a convex univalent function. Likewise, a straightforward calculation shows that (see for example \cite{Po08})
\begin{equation}\label{Af analytic and Koebe}
\frac{f\left(\frac{z+z_0}{1+\overline{z_0}z} \right)-f(z_0) }{(1-|z_0|^2)f'(z_0)}=z+A_f(z_0)z^2+\cdots,\quad z\in\mathbb{D},
\end{equation}
for all $z_0\in\mathbb{D}$ given. Also,
\begin{equation}\label{Af analytic and poincare metric}
\frac{\partial}{\partial z}\log|(1-|z|^2)f'(z)|=\frac{1}{1-|z|^2}A_f(z),\quad z\in\mathbb{D}
\end{equation}
and, if $\varphi:\mathbb{D}\to\mathbb{D}$ is analytic and locally univalent, then
\begin{equation*}
A_{f\circ\varphi}(z)=\frac{(1-|z|^2)\varphi'(z)}{1-|\varphi(z)|^2}\left( A_f(\varphi(z))+\overline{\varphi(z)} \right)+A_{\varphi}(z), 
\end{equation*}
for all $z\in\mathbb{D}.$ In particular, if $\varphi\in \text{Aut}(\mathbb{D}),$ where $\text{Aut}(\mathbb{D})$ denotes the family of automorphisms of $\mathbb{D},$ then
\begin{equation}\label{chain rule Af analytic}
A_{f\circ\varphi}(z)=\frac{\varphi'(z)}{|\varphi'(z)|}A_f(\varphi(z)), 
\end{equation}
for all $z\in\mathbb{D}.$

Due to the previously described, and based on the work developed by Pommerenke, some authors have defined and studied in other contexts operators with similar properties to $A_f$. For example, Ma and Minda \cite{MaMi92,MaMi94} investigated topics related to linear invariance but in two non Euclidean geometries: they proposed both a definition of spherical linear invariance for locally univalent meromorphic functions defined on the unit disk and a definition of hyperbolic linear invariance for locally univalent functions that map the unit disk into itself. More precisely, they defined the order of a locally univalent meromorphic function $f$ defined in $\mathbb{D}$ by $\alpha_s(f):=\sup_{z\in\mathbb{D}}|A^{\#}_f(z)|,$ where
\begin{equation*}
A^{\#}_f(z)=\frac{1-|z|^2}{2}\frac{f''(z)}{f'(z)}-\overline{z}-\frac{(1-|z|^2)f'(z)\overline{f(z)}}{1+|f(z)|^2},
\end{equation*}
and the order of a locally univalent holomorphic function $f:\mathbb{D}\to \mathbb{D}$ by
$\alpha_h(f):=\sup_{z\in\mathbb{D}}|A^{h}_f(z)|,$ where 
\begin{equation*}
A^{h}_f(z)=\frac{1-|z|^2}{2}\frac{f''(z)}{f'(z)}-\overline{z}+\frac{(1-|z|^2)f'(z)\overline{f(z)}}{1-|f(z)|^2}.
\end{equation*}
Just like in the Euclidean setting, in both cases the authors proved that the order always is greater or equal than $1$, and the equality occurs precisely when $f$ is spherically or hyperbolically convex, respectively. We remark that other authors, in particular J. A. Pfaltzgraff and T. J. Suffridge \cite{97 linear invariance in Cn,99 linear invariance in Cn}, have investigated the $n$-dimensional version of Pommerenke's theory of (Euclidean) linear invariance. It has also been extended to the setting of planar harmonic mappings, scenario in which the present paper is framed. In this context the pioneering work was that of T. Sheil-Small \cite{Sheil small}, in which introduced  the notion of an affine and linear invariant family of univalent harmonic functions.

On the other hand, Cruz and Pommerenke \cite{CrPo06} defined the concept of lower linear-invariance order for a locally univalent holomorphic function defined in $\mathbb{D}$ by
\begin{equation*}
\mu(f):=\inf_{z\in\mathbb{D}}|A_f(z)|.
\end{equation*}
It is known that $0\leq\mu(f)\leq1$ \cite[Proposition 2]{CrPo06}. Moreover, the lower order is closely related with properties of concavity; for example, in \cite{CrPo06} is proved that $0\leq \mu(f)\leq 1$ and if $f$ is univalent, $\mu(f)=1$ holds exactly if $f$ is concave. Subsequently in \cite{Po08} the author carries out a deeper study of the lower order and proved interesting results under the hypothesis $\mu(f)>0$. More recently and following the ideas of Pommerenke, J. Arango \textit{et al.} in \cite{AAM17} and \cite{AAM19}, defined the lower linear invariant spherical order by $\mu_s(f)=\inf_{z\in\mathbb{D}}|A^{\#}_f(z)|$ and the lower linear invariant hyperbolic order by $\mu_h(f)=\inf_{z\in\mathbb{D}}|A^{h}_f(z)|$. In  \cite{AAM17} and \cite{AAM19}, the authors noted that, as in the Euclidean case, $0\leq\mu_s(f)\leq 1$ and there are functions with lower order positive, however, in the hyperbolic case, every function $f$, satisfies that $\mu_h(f)=0$.

In this work, in a similar way to the ideas discussed above, we introduce the concepts of lower and upper order for a sense-preserving harmonic mapping $f:\mathbb{D}\to \mathbb{C}$, which match respectively with $\mu(f)$ and $\alpha(f)$ in the case when $f$ is analytic. Although these notions appear implicitly in previous researches about affine and linear invariant families of harmonic functions, we focus our attention on properties of the operator $A_f$ in the setting of planar harmonic mappings; in Section\,\ref{sec Af} we show that it has invariance properties similar to those of the operator defined by (\ref{Af1}). In Section\,\ref{section lower order}, we define the lower order for a sense-preserving harmonic mapping, we show that there are functions with positive lower order, and we generalize, to the harmonic case, some of the results obtained by Pommerenke in \cite{Po08}. In Section\,\ref{section linear invariance}, we establish a relation between the upper order and linearly connected domains. Finally, we study a characterization of functions with finite order in terms of bounds for the Jacobian.  In addition, we exhibit some interesting examples, which illustrate the theory.

\section{Preliminaries on harmonic mappings}\label{Section preliminaries}

In this section we introduce briefly some basic results about harmonic functions in the plane, which are used throughout this paper.

Let $f$ be a planar harmonic mapping defined on a domain $\Omega\subseteq\mathbb{C}.$ It is well known that if $\Omega$ is simply connected, $f$ has the canonical representation $f=h+\overline{g}$, where $h$ and $g$ are analytic functions in $\Omega$; this representation is unique up to an additive constant, which
is usually determined by imposing the condition $g(z_0)=0$ for some $z_0$ fixed in $\Omega$. Lewy \cite{Le} proved that $f$ is locally univalent in $\Omega$ if and only if its Jacobian $J_f=|h'|^2-|g'|^2$ does not vanish. Thus, if $f$ is locally univalent in $\Omega,$ it is either sense-preserving or sense-reversing depending on the conditions $J_f>0$ or $J_f<0$ throughout the domain $\Omega,$ respectively.  Along this paper we will consider sense-preserving harmonic mappings on the unit disk $\mathbb{D},$ in this case the analytic part $h$ is locally univalent in $\mathbb{D}$ and the second complex dilatation of $f,$ $\omega = g'/h',$ is an analytic function in $\mathbb{D}$ satisfying $|\omega|<1.$ The family of all sense-preserving univalent harmonic mappings $f=h+\overline{g}$ defined in $\mathbb{D}$, normalized by $h(0) = 0,$ $g(0) = 0,$ and $h'(0) = 1$, will be denoted by $S_H$. Also, $S_H^0$ will denote the subclass of functions in $S_H$ that satisfy the further normalization $g'(0) = 0,$ $K_H$ will denote the subclass of functions in $S_H$ that are convex, and $K^0_H=K_H\cap S_H^0.$

\section{The operator $A_f.$}\label{sec Af}

In a similar form to the analytic case, given a sense-preserving harmonic mapping $f=h+\overline{g}$ in $\mathbb{D},$ we define the operator $A_{f}$ by
\[
A_{f}(z)  =\frac{\left(1-|z|^{2}\right)
}{2}P_{f}(z)-\overline{z},\qquad z\in\mathbb{D},
\]
where
\begin{equation}\label{preschwarzian}
	P_{f}=\partial_z\log J_{f} =\frac{h''}{h'}-\frac{\overline{\omega}\omega'}{1-\left\vert
		\omega\right\vert ^{2}}
\end{equation}
is the \textit{pre-schwarzian derivative of}$f$ introduced in \cite{HM}. Hence, we can express $A_f$ in the form
\begin{equation}\label{A_f harmonic 2}
	A_f(z)=A_h(z)-\frac{(1-|z|^2)}{2}\frac{\overline{\omega(z)}\omega'(z)}{1-\left\vert
		\omega(z)\right\vert ^{2}},\qquad z\in\mathbb{D},
\end{equation}
where $A_h$ is given by \eqref{Af1}. Observe that if $f$ is analytic, then $\omega=0$ and therefore $A_f=A_h.$ Thus, in the analytic case $A_f$ coincides with the operator introduced in \cite{Po64} for analytic functions defined in $\mathbb{D}.$

We remark that $A_f$ arises in a natural way when studying problems related to the second coefficient of affine and linear invariant families of harmonic mappings in the unit disk. Indeed, as in the analytic case, it appears in an expression of the type \eqref{Af analytic and Koebe}, which can be seen by the following standard argument: given $a\in\mathbb{D},$ we consider the function
\[F(z)=\frac{f\left(\frac{a+z}{1+\overline{a}z} \right) -f(a)}{(1-|a|^2)h'(a)}:=H(z)+\overline{G(z)},\]
which satisfies $A_f(a)=A_F(0).$  So, if $B_1:=G'(0),$
\[F_0(z)=\frac{F(z)-\overline{B_1F(z)}}{1-|B_1|^2}=H_0(z)+\overline{G_0(z)}\]
satisfies $H_0'(0)=1,$ $G_0'(0)=0,$ and
\begin{equation}\label{eq Af and a2}
A_f(a)=A_F(0)=A_{F_0}(0)=A_{H_0}(0)=\frac{H_0''(0)}{2},
\end{equation}
being the penultimate equality a consequence of \eqref{A_f harmonic 2}.

Further properties of $A_f,$ including among them equalities of the type \eqref{Af analytic and poincare metric} and \eqref{chain rule Af analytic},  are provided in the following proposition. The proof is a straightforward calculation, which we include here in order to make our exposition self contained.

\begin{proposition}\label{prop properties of A_f}
	Let $f=h+\overline{g}$ be a sense-preserving harmonic mapping in $\mathbb{D}$. Then
	
	(i) $A_f$ does not vanish identically;
	
	(ii) For all $\sigma\in \text{Aut}\left( \mathbb{D}\right),$ we have $A_{f\circ\sigma}=\dfrac{\sigma'}{\left\vert \sigma'\right\vert} \left(  A_{f}\circ\sigma\right);$
	
	(iii) For all affine mapping $L\left(  z\right)
	=az+b\overline{z}+c,$ with $a,b,c\in\mathbb{C}$ and $\left\vert b/a\right\vert
	<1,$ we have $A_{L\circ f}=A_{f}.$ 
\end{proposition}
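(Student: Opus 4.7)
The plan is to prove the three claims in turn, using the decomposition \eqref{A_f harmonic 2} together with the identity $P_f=\partial_z\log J_f$. Parts (ii) and (iii) will reduce to bookkeeping, whereas (i) needs a slightly finer observation.

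For (i), I would argue by contradiction. If $A_f\equiv 0$ on $\mathbb{D}$, then $P_f(z)=2\overline{z}/(1-|z|^2)$ identically. Applying $\partial_{\overline{z}}$ kills the holomorphic contribution $h''/h'$ to $P_f$, and a short quotient-rule computation gives
\[
\partial_{\overline{z}}\!\left(-\frac{\overline{\omega}\,\omega'}{1-|\omega|^2}\right)=-\frac{|\omega'|^2}{(1-|\omega|^2)^2}\le 0,\qquad \partial_{\overline{z}}\!\left(\frac{2\overline{z}}{1-|z|^2}\right)=\frac{2}{(1-|z|^2)^2}>0,
\]
which are incompatible at every point of $\mathbb{D}$, yielding the desired contradiction.

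For (ii), I would write $F:=f\circ\sigma=H+\overline{G}$ with $H=h\circ\sigma$ and $G=g\circ\sigma$, so that the dilatation satisfies $\omega_F=\omega\circ\sigma$. The analytic chain rule \eqref{chain rule Af analytic} applied to $h$ gives $A_H(z)=(\sigma'(z)/|\sigma'(z)|)\,A_h(\sigma(z))$. The non-holomorphic correction in \eqref{A_f harmonic 2} acquires an extra factor $\sigma'(z)$ coming from $\omega_F'=(\omega'\circ\sigma)\sigma'$, which I rewrite using the Schwarz--Pick equality $(1-|z|^2)|\sigma'(z)|=1-|\sigma(z)|^2$ in order to extract the same unimodular factor $\sigma'/|\sigma'|$; assembling the two pieces produces exactly $(\sigma'/|\sigma'|)\,A_f(\sigma(z))$. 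For (iii), I would bypass any coefficient calculation: since $L$ has real Jacobian equal to the positive constant $|a|^2-|b|^2$, the chain rule for Jacobians gives $J_{L\circ f}=(|a|^2-|b|^2)J_f$, so $P_{L\circ f}=\partial_z\log J_{L\circ f}=\partial_z\log J_f=P_f$, and $A_{L\circ f}=A_f$ follows from the definition of $A_f$.

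The main obstacle is (i). The sign comparison is cheap once $\partial_{\overline{z}}$ is applied, but recognising this as the right move requires separating the holomorphic and non-holomorphic contributions to $P_f$ and noticing that $\overline{\omega}\omega'/(1-|\omega|^2)$ has a $\partial_{\overline{z}}$-derivative of definite sign. A more geometric alternative is to observe that $A_f(z)/(1-|z|^2)=\partial_z\log[(1-|z|^2)\sqrt{J_f(z)}]$, so that $A_f\equiv 0$ would force $J_f(z)=c/(1-|z|^2)^2$ for some constant $c>0$; the same $\partial_{\overline{z}}$ calculation then yields the contradiction.
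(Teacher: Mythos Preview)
Your proof is correct. Parts (ii) and (iii) follow essentially the same lines as the paper, with minor organisational differences: for (ii) you invoke the analytic chain rule \eqref{chain rule Af analytic} and handle the $\omega$-correction separately via \eqref{A_f harmonic 2}, whereas the paper computes $P_{f\circ\sigma}=(P_f\circ\sigma)\sigma'+\sigma''/\sigma'$ directly and then appeals to Schwarz--Pick; for (iii) you derive $P_{L\circ f}=P_f$ from the Jacobian chain rule $J_{L\circ f}=(|a|^2-|b|^2)J_f$, while the paper simply cites \cite{HM} for this identity---so your argument is in fact more self-contained.

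Part (i) is where you genuinely diverge. The paper uses \eqref{diferential form of A_f} to see that $A_f\equiv 0$ forces $(1-|z|^2)J_f^{1/2}(z)$ to be a positive constant $k$; combined with $J_f\le|h'|^2$ this gives $|h'(z)|\ge k/(1-|z|^2)\to\infty$ as $|z|\to1$, contradicting the maximum principle applied to $1/h'$. Your $\partial_{\overline z}$ sign comparison bypasses the maximum principle entirely and is arguably more elementary: it exploits only the pointwise sign of $\partial_{\overline z}P_f$ versus that of $\partial_{\overline z}(2\overline z/(1-|z|^2))$, so no boundary behaviour of $h'$ enters. The paper's route, by contrast, converts the hypothesis into a concrete growth obstruction on $h'$ and ties the result to classical function theory. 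Your ``geometric alternative'' at the end reaches the same constant-$J_f$ formula as the paper, but you then close it with the $\partial_{\overline z}$ argument rather than the maximum principle, so it is really the same idea rephrased.
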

\begin{proof}
	To prove \textit{(i)}, we first note that
	\begin{equation}\label{A1}
		J_{f}=\left\vert h'\right\vert ^{2}-\left\vert g'\right\vert
		^{2}\leq\left\vert h'\right\vert ^{2}
	\end{equation}
	and moreover $A_{f}$ can be expressed in the form
	\begin{equation}\label{diferential form of A_f}
		A_{f}(z)=(1-|z|^{2})\frac{\partial}{\partial z}\log\left\{  (1-|z|^{2}%
		)J_{f}^{1/2}(z)\right\}.
	\end{equation}
	Thus, if $A_{f}$ is identically zero, then there is $k\in\mathbb{R}$
	such that
	\[
	(1-|z|^{2})J_{f}^{1/2}(z)=k
	\]
	for all $z\in\mathbb{D}.$ It follows from here and from (\ref{A1}) that 
	\[
	\left\vert h'(z)\right\vert \geq\frac{k}{1-|z|^{2}},\qquad\text{ for all }z\in\mathbb{D},
	\]
	from where $\left\vert h'\right\vert \rightarrow\infty$ as $|z|\rightarrow1,$ which is a contradiction with the maximum principle for the analytic function $1/h'.$
	
	Now we prove \textit{(ii)}. A straightforward calculation shows that
	\[
	P_{f\circ\sigma}=\left(  \frac{h''}{h'}\circ
	\sigma\right)  \sigma'+\frac{\sigma''}{\sigma'}%
	-\frac{\overline{\omega\circ\sigma}\left(  \omega\circ\sigma\right)'}{1-\left\vert \omega\circ\sigma\right\vert ^{2}}\sigma'=\left(  P_{f}\circ\sigma\right)  \sigma'+\frac{\sigma''}{\sigma'}.
	\]
	Thus, by Schwarz-Pick's lemma,
	\begin{align*}
		A_{f\circ\sigma}(z)  &  =\frac{\left(  1-\left\vert z\right\vert
			^{2}\right)  \sigma'(z)  }{2}P_{f}\left(  \sigma(z) \right)  +\frac{\left(  1-\left\vert z\right\vert ^{2}\right)  }{2}\frac{\sigma''}{\sigma'}(z)-\overline{z}\\
		&  =\frac{\left(  1-\left\vert \sigma(z)\right\vert^{2}\right)}{2}\frac{\sigma'(z)}{\left\vert
			\sigma'(z)\right\vert }P_{f}\left(  \sigma(z)\right)  -\frac{\sigma'(z)}{\left\vert\sigma'(z)\right\vert}\overline{\sigma(z)}\\
		&=\frac{\sigma'(z)}{\left\vert \sigma'(z)\right\vert}A_{f}\left(\sigma(z)\right),
	\end{align*}
	for $z\in\mathbb{D}$ arbitrary.
	
	Finally, by Proposition\,1 in \cite{HM} we have $P_{L\circ f}=P_{f}.$ So,
	\[
	A_{L\circ f}\left(  z\right)  =\frac{\left(  1-\left\vert z\right\vert
		^{2}\right)  }{2}P_{L\circ f}\left(  z\right)  -\overline{z}=\frac{\left(
		1-\left\vert z\right\vert ^{2}\right)  }{2}P_{f}\left(  z\right)
	-\overline{z}=A_{f}\left(  z\right),
	\]
	for $z\in\mathbb{D}$ arbitrary, which proves \textit{(iii)}.
\end{proof}

\section{Lower order}\label{section lower order}

Following the ideas presented in \cite{CrPo06} and \cite{Po08}, we introduce the notion of lower order in the setting of harmonic mappings and we will prove similar results to Proposition 4.1 and Proposition 5.1 in \cite{Po08}.

Let $f=h+\bar{g}$ be a sense-preserving harmonic mapping in $\mathbb{D}$. We define the \textit{lower order} of $f$ by
\[\mu(f)=\inf_{z\in \mathbb{D}}|A_f(z)|,\]
where 
\[A_f(z)=A_h(z)-\frac{(1-|z|^2)}{2}\frac{\overline{\omega(z)}\omega'(z)}{1-|\omega(z)|^2},\qquad z\in\mathbb{D}.\]
From the properties of $A_f$ it follows that $\mu(L\circ f\circ \sigma)=\mu(f)$, for $L$ an affine mapping and $\sigma \in \text{Aut}(\mathbb{D})$. Since $0\leq \mu(h)\leq 1,$ because $h$ is a locally univalent analytic function, we obtain from the Schwarz-Pick classical inequality that $0\leq\mu(f)\leq 3/2.$ To illustrate we present two examples, which show that the bound 3/2 is sharp, and that there are harmonic mappings with positive lower order.
\begin{example}\label{example L} We consider the harmonic mapping
	\[
	L(z)  =\frac{1}{2}[l(z)+k(z)]+\overline{\frac{1}{2}[l(z)-k(z)]},
	\]
	where
\[l(z)=\frac{z}{1-z}\qquad\text{and}\qquad k(z)=\frac{z}{(1-z)^{2}}.\]
We know that $L$ belongs to $K_H^0$ and it maps $\mathbb{D}$ onto the full half plane $\text{Re}\left\lbrace w\right\rbrace >-1/2,$ see for example \cite[p. 41]{Du04}. Now, a direct calculation gives
\[A_L(z)=\frac{3}{2}\left( \frac{1-\bar{z}}{1-z}\right),\qquad z\in\mathbb{D},\]
and consequently $\mu(L)=3/2,$ which shows that the bound referred above is sharp.
\end{example}
\begin{example} \label{example koebe}
We now consider the Koebe harmonic function, which is defined by $K(z)=h(z)+\overline{g(z)}=k(z)+2\text{Re}\left\lbrace g(z) \right\rbrace ,$ where
\[h(z)=\frac{z-z^2/2+z^3/6}{(1-z)^3}\qquad\text{ and }\qquad g(z)=\frac{z^2/2+z^3/6}{(1-z)^3}.\]
The Koebe harmonic function was constructed by Clunie and Sheil-Small \cite{Clunie and Sheil} as a candidate to play the role of extremal function for some problems in the class $S_H^0,$ see also \cite[p. 82]{Du04}. It maps $\mathbb{D}$ harmonically onto the full plane slit along the negative real axis from $-1/6$ to infinity.

After an algebraic calculation, we obtain
\[A_K(z)=\frac{3}{2}\left[ \frac{1-|z|^2}{1-z^2}\left(\frac{5}{3}+z \right)-\overline{z}  \right]=\frac32\left[\frac{\frac53(1-|z|^2)+2i\rm{Im}\{z\}}{1-z^2}\right] .\] A straightforward calculation gives that $$\left|\frac53(1-|z|^2)+2i\rm{Im}\{z\}\right|^2-\left|1-z^2\right|^2=\frac{16}{9}\left(1-|z|^2\right)^2.$$ Then, we have that $$|A_K(z)|=\frac32+2\cdot\frac{1-|z|^2}{|1-z^2|}.$$ Therefore, $\mu(K)=3/2.$
\end{example}
We recall that a continuous, injective, and  unbounded function $f:\mathbb{D}\to\mathbb{C}$ is said to be concave, if $\mathbb{C}\setminus f(\mathbb{D})$ is convex. The functions $L$ and $K$ of Examples \ref{example L} and \ref{example koebe} are concave harmonic functions. As was mentioned in the introduction, if $f$ is analytic and locally univalent in $\mathbb{D}$, then $0\leq \mu(f)\leq 1.$ When $f$ is univalent, the upper bound is attained if and only if $f$ is concave, which is equivalent (if we assume $f(\mathbb{D})$ with angle $\pi \alpha$ at $\infty,$ $1\leq \alpha\leq 2$) to that $f$ satisfies
\[\text{Re}\left\lbrace \frac{\alpha+1}{2}\frac{1+z}{1-z}-1-z\frac{f''(z)}{f'(z)} \right\rbrace >0,\qquad z\in\mathbb{D},\]
see \cite[p. 154]{CrPo06}. In this direction, in the setting of harmonic functions, we provide a lower bound for the lower order of a sense-preserving harmonic mapping $f=h+\overline{g},$ satisfying the property $\varphi_{\lambda}=h+\lambda g$ concave for all $|\lambda|=1$. The existence of such functions can be proved as follows. For any $\alpha>1$ there existe an unbounded univalent analytic function $h$ satisfying
\[\text{Re}\left\lbrace \frac{\alpha+1}{2}\frac{1+z}{1-z}-1-z\frac{h''(z)}{h'(z)} \right\rbrace\geq \beta>0,\qquad z\in\mathbb{D}, \]
for some $1<\alpha\leq 2$ (for instance $h(z)=(1-z)^{-(1+2\beta)}$). Therefore,
\begin{align*}
\text{Re}\left\lbrace \frac{\alpha+1}{2}\frac{1+z}{1-z}-1-z\frac{\varphi_{\lambda}''(z)}{\varphi_{\lambda}'(z)} \right\rbrace&= \text{Re}\left\lbrace \frac{\alpha+1}{2}\frac{1+z}{1-z}-1-z\left(\frac{h''(z)}{h'(z)}+\frac{\lambda \omega'(z)}{1+\lambda\omega(z)} \right)  \right\rbrace\\
& = \text{Re}\left\lbrace \frac{\alpha+1}{2}\frac{1+z}{1-z}-1-z\frac{h''(z)}{h'(z)} \right\rbrace-\text{Re}\left\lbrace \frac{\lambda z \omega'(z)}{1+\lambda\omega(z)} \right\rbrace\\
&\geq \beta- \text{Re}\left\lbrace \frac{\lambda z \omega'(z)}{1+\lambda\omega(z)} \right\rbrace.
\end{align*}
In consequence, if we put the condition that $\omega(z)=\rho z,$ with $|\rho|<1,$ we obtain that
\[\text{Re}\left\lbrace \frac{\alpha+1}{2}\frac{1+z}{1-z}-1-z\frac{\varphi_{\lambda}''(z)}{\varphi_{\lambda}'(z)} \right\rbrace\geq \beta- \text{Re}\left\lbrace \frac{\rho\lambda z}{1+\rho\lambda z} \right\rbrace\geq \beta-\frac{|\rho|}{1-|\rho|}.\]
It follows that $\varphi_{\lambda}$ is concave for all $|\lambda|=1,$ if $|\rho|<\beta/(1+\beta),$ \cite[p. 154]{CrPo06}.
\begin{proposition}\label{prop stable concave}
Let $f=h+\overline{g}:\mathbb{D}\to\mathbb{C}$ be a  sense-preserving harmonic mapping such that $\varphi_\lambda=h+\lambda g$ is concave for all $|\lambda|=1$. Then $1\leq \mu(f)\leq 3/2.$
\end{proposition}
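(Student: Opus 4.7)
The plan is to focus on the nontrivial inequality $\mu(f) \geq 1$, since the upper bound $\mu(f) \leq 3/2$ has already been observed (via Schwarz-Pick applied to the locally univalent analytic function $h$). The strategy is to convert the one-parameter family of concavity hypotheses on the functions $\varphi_\lambda$ into a pointwise estimate for $A_f$ at an arbitrary $z\in\mathbb{D}$.

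First I would write out $A_{\varphi_\lambda}$ using \eqref{Af1} and the relation $g'=\omega h'$, then combine with the expression \eqref{A_f harmonic 2} for $A_f$. After a routine simplification one obtains the identity
\[
A_{\varphi_\lambda}(z) \;=\; A_f(z) + B(z)\,T_z(\lambda),
\]
where
\[
B(z) = \frac{(1-|z|^2)\,\omega'(z)}{2\,(1-|\omega(z)|^2)}, \qquad T_z(\lambda) = \frac{\lambda + \overline{\omega(z)}}{1+\lambda\,\omega(z)}.
\]

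The crux of the argument is that, for each fixed $z$, the map $T_z$ is a disk automorphism: its pole $-1/\omega(z)$ lies outside $\overline{\mathbb{D}}$ and $T_z(0)=\overline{\omega(z)}\in\mathbb{D}$, so $T_z$ carries the unit circle bijectively onto itself. Because each $\varphi_\lambda$ is concave, the analytic-case result quoted just before the statement gives $\mu(\varphi_\lambda)=1$, hence $|A_{\varphi_\lambda}(z)|\geq 1$ for every $|\lambda|=1$. Combined with the identity above, this translates into $|A_f(z)+B(z)e^{i\phi}|\geq 1$ for every $\phi\in\mathbb{R}$; minimizing the left-hand side over $\phi$ yields
\[
\bigl||A_f(z)|-|B(z)|\bigr|\;\ge\;1.
\]

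Finally, the Schwarz-Pick lemma applied to $\omega\colon\mathbb{D}\to\mathbb{D}$ gives $|B(z)|\leq 1/2$, which rules out the alternative $|B(z)|\geq |A_f(z)|+1$ and therefore forces $|A_f(z)|\ge 1+|B(z)|\ge 1$. Taking the infimum over $z\in\mathbb{D}$ yields $\mu(f)\ge 1$, completing the proof. The main obstacle, in my view, is recognizing that $T_z$ sweeps out the \emph{entire} unit circle: without this observation one would apply the triangle inequality with a single ad hoc choice of $\lambda$, which only produces the weaker bound $\mu(f)\ge 1/2$; it is the two-sided control coming from the full circle of values, together with the $1/2$-bound on $|B(z)|$, that pins down the sharp constant $1$.
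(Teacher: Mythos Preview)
Your proof is correct and follows essentially the same route as the paper: both derive the identity $A_{\varphi_\lambda}(z)=A_f(z)+B(z)\,\sigma(\lambda)$ with $\sigma(\lambda)=(\lambda+\overline{\omega(z)})/(1+\lambda\omega(z))$, use that $\sigma$ sends the unit circle onto itself, and combine $|A_{\varphi_\lambda}(z)|\ge 1$ with the Schwarz--Pick bound $|B(z)|\le 1/2$. The only cosmetic difference is that the paper first records the preliminary estimate $|A_f(z)|\ge 1/2$ and then picks a single $\lambda$ so that $|A_{\varphi_\lambda}(z)|=|A_f(z)|-|B(z)|$, whereas you take the minimum over the full circle to obtain $\bigl||A_f(z)|-|B(z)|\bigr|\ge 1$ directly and then eliminate the spurious alternative; the two arguments are equivalent.
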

\begin{proof}
It only remains to prove that $1\leq \mu(f).$ A tedious but straightforward calculation yields
\begin{equation}\label{eq aux shc}
	A_f(z)=A_{\varphi_\lambda}(z)-\frac{\lambda+\overline{\omega(z)}}{1+\lambda\omega(z)}\frac{(1-|z|^2)\omega'(z)}{2(1-|\omega(z)|^2)},
\end{equation}
for all $z\in\mathbb{D}$ and $|\lambda|=1.$ From the concavity of $\varphi_{\lambda}$ we know that $\mu(\varphi_{\lambda})=1,$ so
\[\left|A_f(z) \right|\geq \left|  A_{\varphi_\lambda}(z)\right| -\left| \frac{\lambda+\overline{\omega(z)}}{1+\lambda\omega(z)}\frac{(1-|z|^2)\omega'(z)}{2(1-|\omega(z)|^2)}\right| \geq 1- \frac{1}{2}\left| \frac{(1-|z|^2)\omega'(z)}{1-|\omega(z)|^2}  \right|\geq  \frac{1}{2}.\]
On the other hand, \eqref{eq aux shc} can be expressed in the form 
\[A_f(z)=A_{\varphi_\lambda}(z)-\sigma(\lambda)\frac{(1-|z|^2)\omega'(z)}{2(1-|\omega(z)|^2)},\]
for all $z\in\mathbb{D}$ and $|\lambda|=1,$ where $\sigma$ is the automorphism of $\mathbb{D}$ defined by
\begin{equation}\label{eq def auth sigma}
\sigma(\zeta)=\frac{\zeta+\overline{\omega(z)}}{1+\zeta\omega(z)}.
\end{equation}
Thus, for each $z\in\mathbb{D}$ we can choose $\lambda,$ with $|\lambda|=1,$ such that
\[ 1\leq \left| A_{\varphi_\lambda}(z) \right|=\left| A_f(z) \right|- \frac{1}{2}\left| \frac{(1-|z|^2)\omega'(z)}{(1-|\omega(z)|^2)}  \right|\leq \left| A_f(z) \right|,  \]
which completes the proof.  
\end{proof}

Next, we present a criterion to establish a positive lower bound of $\mu(f),$ which is obtained as follows (see Proposition\,3.1 in \cite{Po08} for the analytic case): we suppose that $f=h+\bar{g}$ is a sense-preserving harmonic mapping in $\mathbb{D},$ $0\leq \lambda \leq 1,$ and
\begin{equation*}
	\left|P_f(z)-\frac{2}{1-z}\right| \leq \frac{2\lambda}{1-|z|^2},\qquad z\in\mathbb{D}.
\end{equation*}
Then, from the equality
\begin{equation*}
	A_f(z)-\frac{1-\overline{z}}{1-z}=\frac{1-|z|^2}{2}P_f(z)-\frac{1-|z|^2}{1-z}=\frac{1-|z|^2}{2}\left[P_f(z)-\frac{2}{1-z} \right], 
\end{equation*}
it follows that
\begin{equation*}\label{eq. aux lower bound u_f}
	\left| A_f(z)\right| \geq 1-\lambda,
\end{equation*}
for all $z\in \mathbb{D},$ from which we deduce that $\mu(f)\geq 1-\lambda.$ As a consequence, we have the following example.
\begin{example}
	Let $f$ be the harmonic mapping defined by
	\[f(z)=\dfrac{z}{1-z}-\overline{\left(\dfrac{z}{1-z}+\log(1-z) \right) }.\]
	By direct calculation one sees that
	\[P_f(z)=\frac{2}{1-z}-\frac{\overline{z}}{1-|z|^2},\]
	whence
	\[\left|P_f(z)-\frac{2}{1-z} \right|=\frac{|z|}{1-|z|^2} \leq \frac{1}{1-|z|^2}.\]
	By the discussed above we get that $\mu(f) \geq 1/2$. On the other hand,
	\[A_f(z)=\frac{1-|z|^2}{1-z}-\frac{\overline{z}}{2},\]
	which implies $A_f(x)=1+\frac{x}{2},$ $-1<x<1,$ and in consequence $\mu(f)\leq 1/2.$ So, $\mu(f)=1/2.$
\end{example}
For the following result we need to recall the definition of Schwarzian derivative of a sense-preserving harmonic mapping $f$ in $\mathbb{D},$ which was introduced in \cite{HM} through the expression $$S_f=\partial_zP_f-\frac{1}{2}(P_f)^2,$$
where $P_f$ is given by \eqref{preschwarzian}. In \cite{ACS19} the authors define the family $NH_\lambda(K),$ $0<\lambda\leq 1$ and $K>0,$ as the set of all $K$-quasiconformal sense-preserving harmonic mappings $f$ in $\mathbb{D}$ such that
\begin{equation*}
	|S_f(z)|+\frac{|\omega'(z)|^2}{\left(1-|\omega(z)|^2\right)^2} \leq\frac{2\lambda}{\left(1-|z|^2 \right)^2 }, \qquad z\in\mathbb{D}. 
\end{equation*}
In that same paper, the authors proved the following theorem.\\

\noindent\textbf{Theorem A.} Let $\lambda\in (0,1)$ and $f\in NH_\lambda(K)$ such that $|P_f(0)|<2\sqrt{1-\lambda}$, then $f$ is bounded. The condition $|P_f(0)|<2\sqrt{1-\lambda}$ is sharp.\\\\
Using this, we easily get the following generalization of Proposition 4.1 in \cite{Po08}. 
\begin{theorem}
	Let $\lambda\in (0,1)$ and $f\in NH_\lambda(K)$, $f$ unbounded. Then $\mu(f)\geq \sqrt{1-\lambda}$.
\end{theorem}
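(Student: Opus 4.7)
The plan is to transport the Theorem~A bound at the origin to an arbitrary point via an automorphism of the disk, exploiting the invariance of both the operator $A_f$ (Proposition~\ref{prop properties of A_f}(ii)) and, as we must verify, of the class $NH_\lambda(K)$. Fix $z_0\in\mathbb{D}$ and pick $\sigma\in\text{Aut}(\mathbb{D})$ with $\sigma(0)=z_0$. Proposition~\ref{prop properties of A_f}(ii) gives $|A_{f\circ\sigma}(0)|=|A_f(z_0)|$, while the very definition of $A_{f\circ\sigma}$ at the origin yields $A_{f\circ\sigma}(0)=\tfrac{1}{2}P_{f\circ\sigma}(0)$. Hence it suffices to show that $f\circ\sigma$ is an unbounded element of $NH_\lambda(K)$, because Theorem~A (contrapositively) then forces $|P_{f\circ\sigma}(0)|\ge 2\sqrt{1-\lambda}$, whence $|A_f(z_0)|\ge \sqrt{1-\lambda}$; taking the infimum over $z_0$ yields $\mu(f)\ge \sqrt{1-\lambda}$.

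The only substantive step is the invariance of $NH_\lambda(K)$ under precomposition with disk automorphisms. Unboundedness is immediate since $\sigma$ is a bijection of $\mathbb{D}$, and $K$-quasiconformality is preserved because $\omega_{f\circ\sigma}=\omega\circ\sigma$ has the same sup-norm as $\omega$. For the defining inequality I would start from the pre-Schwarzian composition rule
\[
P_{f\circ\sigma}(z)=P_f(\sigma(z))\sigma'(z)+\sigma''(z)/\sigma'(z),
\]
already derived inside the proof of Proposition~\ref{prop properties of A_f}, and differentiate once in $z$ to obtain the harmonic Schwarzian composition law $S_{f\circ\sigma}(z)=S_f(\sigma(z))(\sigma'(z))^2$, with the Möbius correction $S_\sigma$ vanishing. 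Together with $(\omega_{f\circ\sigma})'(z)=\omega'(\sigma(z))\sigma'(z)$ and the Schwarz--Pick identity $|\sigma'(z)|/(1-|\sigma(z)|^2)=1/(1-|z|^2)$, this lets one transfer the $NH_\lambda$ inequality for $f$ evaluated at $\sigma(z)$ to the same inequality for $f\circ\sigma$ evaluated at $z$: the overall factor $|\sigma'(z)|^2$ appearing on the left-hand side is absorbed exactly by Schwarz--Pick on the right.

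The main obstacle I anticipate is executing that differentiation cleanly. Because $P_f$ contains the non-analytic summand $-\overline{\omega}\omega'/(1-|\omega|^2)$, verifying that the harmonic Schwarzian satisfies the classical composition law under precomposition with an analytic map requires a short but attentive calculation; one must use that $\overline{\omega\circ\sigma}$ is antiholomorphic in $z$ (since $\sigma$ is analytic), so that $\partial_z$ annihilates it and the computation reduces to the analytic case applied to $h$. Once this composition rule is in place, the rest is bookkeeping and Theorem~A produces the conclusion immediately.
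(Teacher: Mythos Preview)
Your proposal is correct and follows essentially the same route as the paper: transport the origin-based bound of Theorem~A to an arbitrary $z_0$ via a disk automorphism $\sigma$ with $\sigma(0)=z_0$, use $|A_f(z_0)|=\tfrac12|P_{f\circ\sigma}(0)|$, and invoke the contrapositive of Theorem~A on the unbounded map $f\circ\sigma\in NH_\lambda(K)$. The paper's proof simply asserts $f\circ\sigma\in NH_\lambda(K)$ without justification, whereas you sketch the verification of that invariance (via the chain rule $S_{f\circ\sigma}=(S_f\circ\sigma)(\sigma')^2$, the identity $\omega_{f\circ\sigma}=\omega\circ\sigma$, and Schwarz--Pick); this added detail is sound and does not constitute a different approach.
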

\begin{proof}
	Let $z_0\in \mathbb{D}$ and $\sigma(z)=\dfrac{z+z_0}{1+\overline{z_0}z}$ an automorphism of $\mathbb{D}.$ Since $f\circ \sigma\in NH_\lambda(K)$ and also is unbounded, it follows from Theorem A that $|P_{f\circ \sigma}(0)|\geq2\sqrt{1-\lambda}.$ On the other hand, by direct calculation $|A_f(z_0)|=|P_{f\circ \sigma}(0)|/2.$ Thus $|A_f(z_0)|\geq\sqrt{1-\lambda}$, for $z_0\in \mathbb{D}$ arbitrary. 
\end{proof} 
In order to formulate our next result, we need to introduce, in the context of sense-preserving harmonic mappings, the concept of level sets of the density of the Poincar\'e metric. This concept was introduced by Pommerenke in \cite{Po08} in the Euclidean case and recently studied, in the hyperbolic and spherical cases, by J. Arango \textit{et al.} in \cite{AAM19} and \cite{AAM17}, respectively.

Let $f:\mathbb{D}\to \mathbb{C}$ be a sense-preserving harmonic mapping. We consider the level sets 
\[C_f(t)=\left\lbrace z\in \mathbb{D}\mid (1-|z|^2)J_f^{1/2}(z)=t\right\rbrace, \qquad \text{for}\quad 0<t<\infty.\]
Following the ideas developed in \cite{AAM19}, we assume that $A_f(z)\neq 0$ in $\mathbb{D}$ and observe that
\[\nabla\left[(1-|z|^2)J_f^{1/2}(z) \right]=2\overline{\frac{\partial}{\partial z}\left[(1-|z|^2)J_f^{1/2}(z) \right]}=\frac{2J_f^{1/2}(z)|A_f(z)|^2}{A_f(z)}.
\]
From differential equation theory (see \cite{So79}), we know that if $\delta(t),$ $0<t<\infty$, is a smooth and positive function, then the initial value problem
\begin{equation}\label{PVI1}
	\begin{cases}
		z'(t)=\dfrac{\delta(t)}{A_f(z(t))} \\
		z(t_0)=z_0
	\end{cases}
\end{equation}
has a unique maximal solution near of $z_0$, which forms an open Jordan arc orthogonal to $C_f(t)$. If we choose $\delta(t)$ such that $t=(1-|z(t)|^2)J_f^{1/2}(z(t))$, namely, the parameter of the solution $z(t)$ matches the level of the point, by (\ref{PVI1})
\begin{equation}\label{PVI2}
	\frac{1}{t}=\frac{d}{dt}\log\left[ (1-|z(t)|^2)J_f^{1/2}(z(t))\right]=2\operatorname{Re} \left[\frac{A_f(z(t))z'(t)}{1-|z|^2} \right]=\frac{2\delta(t)}{1-|z|^2},
\end{equation}
and so $\delta(t)=\dfrac{1-|z(t)|^2}{2t}$. From this we can consider instead of (\ref{PVI1}) the next initial value problem
\begin{equation}\label{PVI3}
	\begin{cases}
		z'(t)=\dfrac{1-|z(t)|^2}{2tA_f(z(t))} \\
		z(t_0)=z_0.
	\end{cases}
\end{equation}
The solutions $z(t)$, $a<t<b$, of (\ref{PVI3}) are called \textit{the trajectories} of $f$ through $z_0$. It is known that for all $z_0\in\mathbb{D},$ there is a unique trajectory through $z_0$ that goes from $\mathbb{T}=\partial\mathbb{D}$ to $\mathbb{T}.$

Using the previous development we will prove the following theorem, whose proof is similar to that of Proposition 5.1 in \cite{Po08}.
\begin{theorem}\label{thm4}
	Let $\mu(f)>0$ and $a<t_1<t_2<b$. If $z_j=z(t_j),$ $j=1,2,$ are on
	the trajectory $\Gamma$, then
	\begin{equation*}
		\frac{(1-|z_2|^2)J_f^{1/2}(z_2)}{(1-|z_1|^2)J_f^{1/2}(z_1)}\geq \operatorname{exp} \left[2\mu(f) \rho (z_1,z_2) \right].
	\end{equation*}
Here and subsequently, $\rho(z,w)$ stands for the hyperbolic distance between $z,w\in\mathbb{D}.$
\end{theorem}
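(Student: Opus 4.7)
The key observation is that along a trajectory the quantity $(1-|z(t)|^2)J_f^{1/2}(z(t))$ equals the parameter $t$ by construction, so the ratio on the left-hand side of the claimed inequality is simply $t_2/t_1$. Hence the statement is equivalent to the bound
\[
\log\frac{t_2}{t_1}\;\geq\;2\mu(f)\,\rho(z_1,z_2).
\]
The plan is to prove this by estimating the hyperbolic length of the piece of $\Gamma$ joining $z_1$ to $z_2$ from above by $\tfrac{1}{2\mu(f)}\log(t_2/t_1)$, and then invoking the fact that $\rho$ is a length minimizer.

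First I would compute the hyperbolic length of the trajectory arc. From the defining ODE \eqref{PVI3}, one has $|z'(t)|=(1-|z(t)|^2)/(2t|A_f(z(t))|)$, so the hyperbolic length of $\Gamma$ between $z_1$ and $z_2$ equals
\[
\int_{t_1}^{t_2}\frac{|z'(t)|}{1-|z(t)|^2}\,dt \;=\;\int_{t_1}^{t_2}\frac{dt}{2t\,|A_f(z(t))|}.
\]
Using the hypothesis $|A_f(z(t))|\geq\mu(f)>0$ (which is exactly where a positive lower order is needed, and what guarantees the trajectory itself is well-defined and the integrand is finite), this integral is bounded above by $\frac{1}{2\mu(f)}\log(t_2/t_1)$.

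Finally, since $\rho(z_1,z_2)$ is the infimum of the hyperbolic lengths of all curves in $\mathbb{D}$ joining $z_1$ and $z_2$, and $\Gamma$ between $z_1,z_2$ is one such curve, we obtain
\[
\rho(z_1,z_2)\;\leq\;\frac{1}{2\mu(f)}\log\frac{t_2}{t_1},
\]
which rearranges to the desired inequality after recalling that $t_j=(1-|z_j|^2)J_f^{1/2}(z_j)$.

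The argument is essentially routine once the parametrization is in place; the only mild point of care is ensuring the estimate on the length integral is valid, i.e.\ that the trajectory is contained in $\mathbb{D}$ on $[t_1,t_2]$ and that $A_f$ is nonvanishing there. Both are granted by the hypothesis $\mu(f)>0$ and by the existence/uniqueness of trajectories discussed before the statement, so I do not anticipate a genuine obstacle; the proof closely parallels Pommerenke's Proposition~5.1 in the analytic case, with the harmonic quantities $J_f^{1/2}$ and the harmonic $A_f$ replacing $|f'|$ and the analytic operator.
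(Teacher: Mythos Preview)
Your proof is correct and follows essentially the same route as the paper: both use the trajectory ODE to identify $\displaystyle\int_{t_1}^{t_2}\frac{|z'(t)|}{1-|z(t)|^2}\,dt$ with $\displaystyle\int_{t_1}^{t_2}\frac{dt}{2t\,|A_f(z(t))|}$, apply the bound $|A_f|\geq\mu(f)$, and then invoke $\rho(z_1,z_2)\leq l_h(\Gamma(z_1,z_2))$. The only cosmetic difference is that the paper writes the chain as $\log(t_2/t_1)=\int 2|A_f|\,|z'|/(1-|z|^2)\,dt\geq 2\mu(f)\,l_h(\Gamma)\geq 2\mu(f)\rho$, whereas you bound $l_h(\Gamma)$ from above first and then compare with $\rho$.
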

\begin{proof} By (\ref{PVI2}) and (\ref{PVI3}),
	\[\log\frac{t_2}{t_1}=\int_{t_1}^{t_2}\frac{1}{t}dt=\int_{t_1}^{t_2}\frac{2|A_f(z(t))||z'(t)|}{1-|z(t)|^2}dt,\]
	from where
	\begin{equation}\label{aux1 Bloch}
		\begin{split}
			\log\frac{t_2}{t_1}&\geq2\mu(f)\int_{t_1}^{t_2}\frac{|z'(t)|}{1-|z(t)|^2}dt\\
			&=2\mu(f)l_h(\Gamma(z_1,z_2))\\
			&\geq 2\mu(f)\rho(z_1,z_2),
		\end{split}
	\end{equation}
	where $\Gamma(z_1,z_2)$ is the segment of $\Gamma$ between $z_1$ and $z_2$.
\end{proof}
For the following corollary, we recall that a harmonic mapping $f:\mathbb{D}\to\mathbb{C}$ is said to be a Bloch-type function if
\[\sup_{z\in\mathbb{D}}(1-|z|^2)\sqrt{|J_f(z)|\,}<\infty.\]
For some results on Bloch-type functions, we refer to the reader to \cite{Bloch type map}.
\begin{corollary}
	Let $f$ be a sense-preserving harmonic mapping in $\mathbb{D}.$ If $\mu(f)>0,$ then $f$ is not a Bloch-type function. 
\end{corollary}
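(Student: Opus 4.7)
The plan is to argue by contradiction. Suppose $f$ is a Bloch-type mapping, so that there exists $M>0$ with
$$(1-|z|^{2})J_{f}^{1/2}(z) \leq M, \qquad z\in\mathbb{D}.$$
Because $\mu(f)>0$, the operator $A_f$ is nowhere zero on $\mathbb{D}$, which is exactly the hypothesis needed to invoke the trajectory construction preceding Theorem \ref{thm4}. Fix any $z_0\in\mathbb{D}$ and consider the trajectory $z(t)$, $a<t<b$, of $f$ through $z_0$, parameterized so that $t=(1-|z(t)|^{2})J_{f}^{1/2}(z(t))$; set $t_{0}=(1-|z_{0}|^{2})J_{f}^{1/2}(z_{0})$, so $t_{0}>0$ since $f$ is sense-preserving.

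Next I would use the Bloch-type bound to conclude that $b\leq M<\infty$, together with the fact that the trajectory $z(t)$ is an open Jordan arc joining $\mathbb{T}$ to $\mathbb{T}$. In particular, as $t\to b^{-}$, we have $|z(t)|\to 1$, and therefore $\rho(z_{0},z(t))\to\infty$. Applying Theorem \ref{thm4} with $z_{1}=z_{0}$ and $z_{2}=z(t)$ for $t_{0}<t<b$ gives
$$t \;\geq\; t_{0}\,\exp\!\bigl[\,2\mu(f)\,\rho(z_{0},z(t))\,\bigr],$$
and letting $t\to b^{-}$ forces $t\to\infty$, contradicting $b\leq M$. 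Hence $f$ cannot be Bloch-type.

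The argument is short because Theorem \ref{thm4} does most of the work; the only step that requires care is the assertion that $\rho(z_{0},z(t))\to\infty$ along the trajectory, which is precisely where one uses the fact recorded just before Theorem \ref{thm4} that every trajectory runs from $\mathbb{T}$ to $\mathbb{T}$. No further function-theoretic input beyond that and the positivity of $\mu(f)$ is needed.
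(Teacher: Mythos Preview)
Your proof is correct and follows essentially the same route as the paper's. Both arguments use the trajectory construction (valid since $\mu(f)>0$ forces $A_f\neq 0$), the fact that every trajectory runs from $\mathbb{T}$ to $\mathbb{T}$, and the exponential inequality of Theorem~\ref{thm4} to show that the parameter $t=(1-|z(t)|^{2})J_f^{1/2}(z(t))$ is unbounded along the trajectory; the only difference is that the paper argues directly (showing $b=\infty$, hence $\sup_{z\in\mathbb{D}}(1-|z|^2)J_f^{1/2}(z)=\infty$), while you package the same reasoning as a contradiction.
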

\begin{proof}
	We assume the notation of Theorem\,\ref{thm4}. Since $\Gamma$ goes from $\mathbb{T}$ to $\mathbb{T},$ we can fix $z_1$ and let $|z_2|\to 1.$ Then from \eqref{aux1 Bloch} we obtain that $t_2\to \infty$ and consequently $b=\infty.$ Similarly we can conclude that $a=0,$ whence the trajectory $z(t)$ satisfies
	\[(1-|z(t)|^2)\sqrt{J_f(z(t))\,}=t,\qquad\qquad 0<t<\infty.\]
	Hence,
	\[\sup_{z\in\mathbb{D}}(1-|z|^2)\sqrt{J_f(z)\,}=\infty,\]
	which ends the proof.
\end{proof}

\section{Linear invariance order for harmonic mappings}\label{section linear invariance}

Given a sense-preserving harmonic mapping $f=h+\bar{g}$ in $\mathbb{D},$ \textit{the upper order} (or simply the order) of $f$ is defined by
\[\left\| A_f \right\|:=\sup_{z\in\mathbb{D}}|A_f(z)|.\]
Note that by Proposition\,\ref{prop properties of A_f} it follows easily that for all affine mapping $L$ and $\sigma\in \text{Aut}\left(\mathbb{D}\right),$
\[A_{L\circ f\circ\sigma}(z)= \frac{\sigma'(z)}{|\sigma'(z)|}A_f(\sigma(z)),\qquad\text{for all}\quad z\in\mathbb{D},\]
which implies $\left\Vert A_{L\circ f\circ\sigma}\right\Vert =\left\Vert A_{f}\right\Vert.$

It is remarkable that the order of $f$ coincides with the specified order of the affine and linear invariant family generated by $f$ (the linear-affine hull of $\left\lbrace f \right\rbrace $), which was defined in \cite{specified order Graf}. It also coincides with the \textit{strong order} studied in \cite{old and new order ALIF harmonic}, see also \cite{Liu ponnusamy}.
\begin{remark}
	As we have seen, in the setting of harmonic mappings $A_f$ has similar properties to the operator defined by Pommerenke in \eqref{Af1}, but there are some striking differences as well. For example, a classical result in geometric function theory states that for an analytic function $h,$ $\left\Vert A_{h}\right\Vert =1$ exactly if $h$ is a convex univalent function. In the context of harmonic mappings, it is known (see \cite{Clunie and Sheil}) that if $f$ is a convex sense-preserving harmonic mapping in $\mathbb{D}$, then $\left\|A_f \right\|\leq 3/2,$ the function $L$ of Example\,\ref{example L} shows that the bound is sharp. However, the following example shows that in the setting of harmonic mappings, the condition $\left\Vert A_{f}\right\Vert =3/2$ does not imply that $f$ is convex.   
\end{remark}
\begin{example}\label{ex f is not convex}
Given $n\geq 2,$ we consider the harmonic mapping $f(z)=z+\dfrac{1}{n}\overline{z}^{n},$ which is univalent in $\mathbb{D}$ and $f\left(  \mathbb{D}\right)$ is not convex \cite[p. 3]{Du04}.  Moreover $\left\Vert A_{f}\right\Vert =3/2.$ Indeed, by direct calculation 
	\[
	A_{f}(z)=-\overline{z}\left[  1+\frac{n-1}{2}\frac{\left\vert
		z\right\vert ^{2\left(  n-2\right)  }}{1+\left\vert z\right\vert
		^{2}+\left\vert z\right\vert ^{4}+\cdots+\left\vert z\right\vert ^{2\left(
			n-2\right)  }}\right],
	\]
	and, in consequence
	\[
	\left\vert A_{f}(z) \right\vert =\left\vert z\right\vert
	+\frac{n-1}{2}\frac{\left\vert z\right\vert ^{2n-3}}{1+\left\vert z\right\vert
		^{2}+\left\vert z\right\vert ^{4}+\cdots+\left\vert z\right\vert ^{2\left(
			n-2\right)  }}.
	\]
	Since the function
	\[
	g(x)=x+\dfrac{n-1}{2}\dfrac{x^{2n-3}}{1+x^{2}+\left(
		x^{2}\right)  ^{2}+\cdots+\left(  x^{2}\right)  ^{\left(  n-2\right)  }}%
	\]
	is increasing on $[0,1]$, it follows that $\left\Vert A_{f}\right\Vert =g(1)=3/2.$ 
\end{example}
At present the order $\left\|A_f \right\| $ is only known for very few types of
harmonic mappings. In the following proposition, the order of a stable harmonic convex mapping is established. Let $f=h+\overline{g}$ be a sense-preserving harmonic mapping in $\mathbb{D}.$ In \cite{HeMa13} the authors define that $f$ is stable harmonic convex (SHC) if all the functions $f_{\lambda} = h + \lambda g$ with $|\lambda| = 1$ are convex in $\mathbb{D}$.
\begin{proposition}\label{ord}
	If $f=h+\overline{g}$ is a sense-preserving harmonic mapping in $\mathbb{D}$, then $\left\Vert A_{f}\right\Vert \geq1.$ Also, if $f$ is SHC in $\mathbb{D}$, then $\left\Vert A_{f}\right\Vert =1.$
\end{proposition}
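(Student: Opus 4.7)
The argument splits into the lower bound $\|A_f\|\ge 1$, valid for every sense-preserving harmonic $f$, and the matching upper bound $\|A_f\|\le 1$ under the SHC hypothesis.

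For the lower bound my plan is to apply the divergence theorem in hyperbolic geometry to $U=\log V$, where $V(z)=(1-|z|^2)^2 J_f(z)>0$. From \eqref{diferential form of A_f} one reads off $\partial_z\log V=2A_f(z)/(1-|z|^2)$, which identifies the hyperbolic gradient $|\nabla_{\mathrm{hyp}} U|=2|A_f|$. Since $h$ is locally univalent analytic, $\log|h'|$ is harmonic, and combining this with the standard identities $\Delta\log(1-|z|^2)=-4/(1-|z|^2)^2$ and $\Delta\log(1-|\omega|^2)=-4|\omega'|^2/(1-|\omega|^2)^2$ gives $\Delta\log V\le -8/(1-|z|^2)^2$, equivalently $\Delta_{\mathrm{hyp}} U\le -2$. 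Applying the divergence theorem on the hyperbolic disk $B_R$ of radius $R$ centered at $0$, whose area is $2\pi(\cosh R-1)$ and perimeter is $2\pi\sinh R$, the two bounds combine to give
\[ 4\pi(\cosh R-1)\le 4\pi\,\|A_f\|\sinh R, \]
so $\|A_f\|\ge (\cosh R-1)/\sinh R$; letting $R\to\infty$ yields $\|A_f\|\ge 1$.

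For the SHC case I invoke identity \eqref{eq aux shc}, which I will rewrite in the form $A_f(z)=A_{\varphi_\lambda}(z)-\sigma(\lambda)\,T(z)$, with $T(z)=(1-|z|^2)\omega'(z)/[2(1-|\omega(z)|^2)]$ and $\sigma$ an automorphism of $\mathbb{D}$, hence a bijection of $\mathbb{T}$ onto itself. By the SHC hypothesis each $\varphi_\lambda$ is analytic and convex, so Pommerenke's classical characterization gives $|A_{\varphi_\lambda}(z)|\le 1$. Rearranging, $|A_f(z)+\mu T(z)|\le 1$ for every $\mu\in\mathbb{T}$; choosing $\mu$ so that $\mu T(z)$ aligns in phase with $A_f(z)$ collapses this to $|A_f(z)|+|T(z)|\le 1$, and in particular $\|A_f\|\le 1$.

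The main point requiring care is the bookkeeping of hyperbolic conventions in the divergence-theorem step; once the identities $|\nabla_{\mathrm{hyp}} U|=2|A_f|$ and $\Delta_{\mathrm{hyp}} U\le -2$ are in place, the sharp value $1$ emerges automatically from the asymptotic $(\cosh R-1)/\sinh R\to 1$.
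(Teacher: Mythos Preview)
Your argument is correct in both parts. For the SHC upper bound you do exactly what the paper does: rewrite \eqref{eq aux shc} as $A_{\varphi_\lambda}(z)=A_f(z)+\sigma(\lambda)T(z)$, use $|A_{\varphi_\lambda}|\le 1$ from the analytic convex case, and choose the unimodular parameter so the two terms align in phase, yielding \eqref{eq aux 2 shc}.

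For the lower bound $\|A_f\|\ge 1$ your route is genuinely different. The paper does not prove this inequality at all; it simply cites \cite[Corollary~3.5]{old and new order ALIF harmonic}, whose argument ultimately rests on the identification \eqref{eq Af and a2} of $A_f$ with the second coefficient of a normalized Koebe transform and then on Pommerenke's analytic result $\alpha(h)\ge 1$. Your proof is self-contained and geometric: from \eqref{diferential form of A_f} you read off that $U=\log[(1-|z|^2)^2J_f(z)]$ has hyperbolic gradient of norm $2|A_f|$, while the decomposition $\log J_f=\log|h'|^2+\log(1-|\omega|^2)$ gives $\Delta_{\mathrm{hyp}}U=-2-(1-|z|^2)^2|\omega'|^2/(1-|\omega|^2)^2\le -2$, with equality precisely in the analytic case. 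The divergence theorem on hyperbolic disks then produces $\|A_f\|\ge(\cosh R-1)/\sinh R\to 1$. This buys you a proof that does not pass through the second-coefficient machinery and makes transparent why the harmonic bound coincides with the analytic one: the extra term coming from the dilatation only pushes the Laplacian further negative, so it cannot lower the order. The cited approach, by contrast, is shorter once the analytic inequality is taken for granted, and it more directly links $\|A_f\|$ to the affine–linear invariant family generated by $f$.
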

\begin{proof} For the proof of $\left\Vert A_{f}\right\Vert \geq1,$ see \cite[Corollary 3.5]{old and new order ALIF harmonic}. The other part of the proof can be done by following a similar argument to that used to obtain \eqref{eq Af and a2}; however we provide an alternative proof, which uses the same ideas as in the proof of Proposition\,\ref{prop stable concave} and it will be used later in the proof of the next corollary.
	
As in the proof of Proposition\,\ref{prop stable concave}, we have
\begin{equation}\label{eq aux prop 3}
A_f(z)+\sigma(\lambda)\frac{(1-|z|^2)\omega'(z)}{2(1-|\omega(z)|^2)}=A_{f_\lambda}(z),
\end{equation}
for all $z\in\mathbb{D}$ and $|\lambda|=1,$ where $\sigma$ is defined as in \eqref{eq def auth sigma}. Thus, given $z_0\in\mathbb{D},$ we conclude of the condition $\omega(z_0)\in\mathbb{D}$ and the fact that $\varphi$ maps $\partial\mathbb{D}$ onto $\partial\mathbb{D},$ that there is $\lambda:=\lambda(z_0)$ such that
\[\left|A_f(z_0)+\sigma(\lambda)\frac{(1-|z_0|^2)\omega'(z_0)}{2(1-|\omega(z_0)|^2)}\right|=\left| A_f(z_0) \right|+\left| \frac{(1-|z_0|^2)\omega'(z_0)}{2(1-|\omega(z_0)|^2)} \right|. \]
It follows from \eqref{eq aux prop 3} and $\left\| A_{f_\lambda} \right\|\leq 1$ that
\begin{equation}\label{eq aux 2 shc}
\left| A_f(z_0) \right|+\left| \frac{(1-|z_0|^2)\omega'(z_0)}{2(1-|\omega(z_0)|^2)} \right|\leq 1,
\end{equation}
for all $z_0\in\mathbb{D}.$ Therefore $\left\|A_f \right\|\leq 1,$ which implies $\left\|A_f \right\|= 1.$
\end{proof}
\begin{corollary}
There is not a SHC mapping with dilatation satisfying
\[\inf_{z\in \mathbb{D}}\left\lbrace\frac{(1-|z|^2)|\omega'(z)|}{1-|\omega(z)|^2} \right\rbrace >0. \]
In particular, there is not a SHC mapping with dilatation $\omega(z)=z,$ $z\in\mathbb{D}.$
\end{corollary}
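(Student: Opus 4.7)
The plan is to argue by contradiction directly from inequality \eqref{eq aux 2 shc} in the proof of Proposition \ref{ord}. That inequality, established under the SHC hypothesis, states
\[
|A_f(z)| + \left|\frac{(1-|z|^2)\omega'(z)}{2(1-|\omega(z)|^2)}\right| \leq 1 \qquad (z\in\mathbb{D}).
\]
So if I suppose, for contradiction, that $f$ is SHC and that
\[
m := \inf_{z\in\mathbb{D}}\frac{(1-|z|^2)|\omega'(z)|}{1-|\omega(z)|^2} > 0,
\]
then the displayed inequality gives $|A_f(z)| \leq 1 - m/2$ for every $z\in\mathbb{D}$, that is, $\|A_f\|\leq 1 - m/2 < 1$. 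This directly contradicts the lower bound $\|A_f\|\geq 1$ stated in Proposition \ref{ord}. That is the whole argument; no delicate limiting procedure is needed.

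For the particular dilatation $\omega(z)=z$, I would just substitute to observe
\[
\frac{(1-|z|^2)|\omega'(z)|}{1-|\omega(z)|^2} = \frac{1-|z|^2}{1-|z|^2} = 1,
\]
so the infimum equals $1>0$ and the first part of the corollary applies.

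The only point that requires care is to make sure the prior results are invoked legitimately. The inequality \eqref{eq aux 2 shc} was derived in the SHC case by choosing, for each $z_0\in\mathbb{D}$, a unimodular $\lambda$ so that the two summands on the left of \eqref{eq aux prop 3} align in argument; this uses only that $\sigma$ from \eqref{eq def auth sigma} maps $\partial\mathbb{D}$ onto $\partial\mathbb{D}$ and that each $f_\lambda$ is convex so that $\|A_{f_\lambda}\|\leq 1$. The lower bound $\|A_f\|\geq 1$ is Proposition \ref{ord}. I foresee no real obstacle: the corollary is essentially a direct reading of the two inequalities $\|A_f\|\geq 1$ and $|A_f(z)|\leq 1 - \tfrac12\cdot(1-|z|^2)|\omega'(z)|/(1-|\omega(z)|^2)$ against each other.
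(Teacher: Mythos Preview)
Your proof is correct and is exactly the argument the paper has in mind: the paper's proof simply says the result ``follows immediately from \eqref{eq aux 2 shc} and the fact that $\left\Vert A_f\right\Vert\geq 1$,'' and you have spelled out precisely that implication, including the verification for $\omega(z)=z$.
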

\begin{proof}
The proof follows immediately from \eqref{eq aux 2 shc} and the fact that $\left\| A_f\right\|\geq 1,$ for every sense-preserving harmonic mapping $f$ defined in $\mathbb{D}.$
\end{proof}
Next, we will prove a result which establishes a relation between the order of a sense-preserving harmonic mapping and linearly connected domains.

We recall that a domain $\Omega\subset\mathbb{C}$ is {\it linearly connected} if there exists a constant $1\leq M<\infty$ such that any two points $w_1,w_2\in\Omega$ are joined by a path $\gamma\subset\Omega$ of length $\ell(\gamma)\leq M|w_1-w_2|$. In such case, we say that $\Omega$ is $M-$linearly connected. For piecewise smoothly bounded domains,
linear connectivity is equivalent to the boundary having no inward-pointing cusps. It is clear that if $\Omega$ is $M-$linearly connected, then so is $c\Omega=\left\lbrace cz\mid z\in\Omega \right\rbrace,$ for all $c\in\mathbb{C}.$ Moreover, for all $\alpha\in\mathbb{D},$ if $L(z)=z+\alpha\overline{z}$ then $L(\Omega)$ is $b-$linearly connected, where $b=M\frac{1+|\alpha|}{1-|\alpha|}.$

Pommerenke \cite[Theorem 5.7]{P92} proved that if $f$ maps $\mathbb{D}$ conformally onto a linearly connected domain, then $\left\Vert A_{f}\right\Vert<2.$ The hyperbolic version of this result was later presented in \cite[Theorem 6]{hyperbolic order}. In the setting of harmonic mappings we have the following result.
\begin{theorem}
	Let $\Omega\subset\mathbb{C}$ be a $M-$linearly connected domain. There is $0<c<1$ such that if a univalent harmonic mapping $f$ satisfies the conditions $f(\mathbb{D})=\Omega$ and $|\omega(z)|\leq c$ for all $z\in\mathbb{D},$ then $\left\Vert A_{f}\right\Vert<2.$ 
\end{theorem}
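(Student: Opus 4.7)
The plan is to split $A_f$ into two pieces using the decomposition \eqref{A_f harmonic 2}, estimate each piece separately, and then choose $c$ small enough to make the total strictly below $2$. The dilatation piece is easy: by the Schwarz--Pick inequality applied to $\omega:\mathbb{D}\to\mathbb{D}$, one has $(1-|z|^2)|\omega'(z)|\leq 1-|\omega(z)|^2$, so
\[
\left|\frac{(1-|z|^2)\,\overline{\omega(z)}\,\omega'(z)}{2(1-|\omega(z)|^2)}\right|\leq \frac{|\omega(z)|}{2}\leq \frac{c}{2}.
\]
Combined with \eqref{A_f harmonic 2}, this gives $\|A_f\|\leq \|A_h\|+c/2$, and the problem is reduced to controlling $\|A_h\|$ strictly below $2-c/2$.

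For the analytic part, the natural move is to invoke the Chuaqui--Hernández theory of univalent harmonic mappings onto linearly connected domains: when $\Omega=f(\mathbb{D})$ is $M$-linearly connected and $|\omega|$ is small enough in terms of $M$, the analytic part $h$ is itself univalent in $\mathbb{D}$ and $h(\mathbb{D})$ is a linearly connected domain with connectivity constant $M'=M'(M,c)$. Heuristically, $\overline{g}=\overline{\int_0^{z}\omega\,h'}$ is a small perturbation of $h$ when $\omega$ is small, so short paths in $\Omega$ between $f(z_1)$ and $f(z_2)$ can be lifted back to produce short paths in $h(\mathbb{D})$ between $h(z_1)$ and $h(z_2)$, up to a multiplicative loss governed by $c$. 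Once $h$ is known to be a conformal map onto an $M'$-linearly connected domain, Pommerenke's result \cite[Theorem 5.7]{P92} yields $\|A_h\|\leq \alpha(M')<2$, with $\alpha(M')$ controlled explicitly in terms of $M'$.

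Combining the two estimates, $\|A_f\|\leq \alpha(M(M,c))+c/2$. The proof is then completed by first picking $c$ below the Chuaqui--Hernández threshold $c_0(M)$ (so that $M'$ is well-defined and bounded in terms of $M$), and then, if necessary, shrinking $c$ further so that $\alpha(M')+c/2<2$; since $\alpha(M')$ remains bounded away from $2$ as long as $c\leq c_0(M)$, a valid choice of $c=c(M)\in(0,1)$ exists.

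The main obstacle lies in the Chuaqui--Hernández step: ensuring that the analytic part $h$ inherits both univalence and a quantitative linear-connectivity bound from $f$. This is a genuinely geometric argument requiring one to track how the affine perturbation $\overline{g}$ distorts distances and paths, and it is where the smallness of $c$ is genuinely needed, as the Example \ref{ex f is not convex} in the paper (and related examples) show that without a bound on $\omega$ the analytic part can fail to be univalent or to have a nice image. The rest of the proof is a straightforward bookkeeping exercise assembling Schwarz--Pick, the geometric lemma for $h$, and Pommerenke's conformal estimate.
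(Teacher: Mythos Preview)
Your route is genuinely different from the paper's, and it almost works, but the final step hides the real difficulty.

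The paper does \emph{not} decompose $A_f$ into $A_h$ plus a dilatation term. Instead it argues by contradiction: assuming $\|A_f\|=2$, it picks $z_n$ with $|A_f(z_n)|\to 2$, forms the affine-and-M\"obius normalized transforms $F_n=H_n+\overline{G_n}\in S_H^0$ (so that $A_f(z_n)=H_n''(0)/2$ exactly, via \eqref{eq Af and a2}), shows that each $H_n(\mathbb D)$ is $\rho$-linearly connected with $\rho$ independent of $n$ (this is the Chuaqui--Hern\'andez step, applied to $F_n$), passes to a locally uniform limit $H_n\to H\in S$, and then uses the Gehring--Hayman inequality to show that $H(\mathbb D)$ is still linearly connected. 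Since $|H''(0)/2|=2$ forces $H$ to be a rotation of the Koebe function, whose image is a slit plane, this is a contradiction.

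Your Schwarz--Pick bound on the dilatation term and your appeal to Chuaqui--Hern\'andez for $h$ are both fine. The gap is in the sentence ``Pommerenke's result yields $\|A_h\|\le \alpha(M')<2$, with $\alpha(M')$ controlled explicitly in terms of $M'$.'' Pommerenke's Theorem~5.7, as cited here, only gives $\|A_h\|<2$ for \emph{each individual} conformal map onto a linearly connected domain; it does not hand you a uniform bound $\alpha(M')<2$ depending only on the connectivity constant. Without that uniformity you cannot close the argument: from $\|A_h\|<2$ alone you get $\|A_f\|<2+c/2$, not $\|A_f\|<2$, and there is no way to ``shrink $c$'' to fix this because the margin $2-\|A_h\|$ may depend on $h$, hence on $f$, hence on $c$ itself.

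Establishing the uniform bound $\sup\{\|A_h\|:h\in S,\ h(\mathbb D)\text{ is }M'\text{-LC}\}<2$ is true, but its proof is exactly the normal-families compactness plus Gehring--Hayman step that the paper carries out. So your strategy does not actually shortcut the argument; it relocates the hard part into an unproved strengthening of Pommerenke's theorem. If you can cite a quantitative version of that theorem, your proof becomes complete and pleasantly direct; otherwise you must supply the compactness argument yourself, at which point you are essentially reproducing the paper's proof in a slightly different packaging.
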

\begin{proof}
We will prove that $0<c\leq 1/(7+2M)$ satisfies the requirement of the theorem. Under this condition on $c,$ which guarantees that $0<c<1/(1+M),$ it is shown in \cite[Theorem\,2]{linealy connected Martin-Rodrigo} that $|\omega(z)|\leq c$ implies $h$ univalent. So proceeding as in the proof of \eqref{eq Af and a2}, we get $|A_f(z)|\leq 2$ for all $z\in\mathbb{D}.$ Thus, if we
	suppose that $\left\Vert A_{f}\right\Vert=2,$ we can then choose a sequence $(z_n)\subset\mathbb{D}$ such that $|A_f(z_n)|$ tends to 2 as $n$ tends to infinity. Next, we consider the sequences of functions in $S_H$ and $S_H^0,$ given respectively by
	\[f_n(z)=\frac{f\left(\frac{z+z_n}{1+\overline{z_n}z}\right)-f(z_n) }{(1-|z_n|^2)h'(z_n)}=:h_n(z)+\overline{g_n(z)}\]
	and
	\[F_n(z)=\frac{f_n(z)-\overline{b_1(n)f_n(z)}}{1-|b_1(n)|^2}=:H_n(z)+\overline{G_n(z)},\]
	where $b_1(n)=g_n'(0)=\omega_{f_n}(0)=\omega(z_n).$ From what was discussed above, it can be concluded that $F_n(\mathbb{D})$ is a $b-$linearly connected domain, with $b=2+M.$ Moreover, if $W_n$ and $\omega_n$ denote the dilatation of $F_n$ and $f_n$ respectively, we obtain
	\[W_n(z)=\frac{G_n'(z)}{H_n'(z)}=\frac{g_n'(z)-b_1(n)h_n'(z)}{h_n'(z)-\overline{b_1(n)}g_n'(z)}=\frac{\omega_n(z)-b_1(n)}{1-\overline{b_1(n)}\omega_n(z)}.\]
	It follows from the condition on $c$ and $\left\|\omega \right\|\leq c$ that $\left\|W_n \right\|<1/(1+b),$ for all $n.$ We conclude from the remark after Theorem\,2 in \cite{linealy connected Martin-Rodrigo} that $H_n(\mathbb{D})$ is $\rho-$linearly connected, for some constant $\rho>1$ independent on $n.$ 
	
	On the other hand, given that $(H_n)$ is a sequence of conformal mappings in the unit disk with $H_n(0)=0$ and $H_n'(0)=1$ for all $n,$ we can suppose, without loss of generality, that $(H_n)$ converges locally uniformly in $\mathbb{D}$ to a univalent function $H:\mathbb{D}\to\mathbb{C},$ satisfying $H(0)=0$ and $H'(0)=1.$ It can be shown that $H(\mathbb{D})$ also is linearly connected. Indeed, given $w,\tilde{w}\in H(\mathbb{D}),$ there are $z,\tilde{z}\in\mathbb{D}$ such that $H(z)=w$ and $H(\tilde{z})=\tilde{w}.$  Thus, for all $n$ there is a curve $\gamma_n\subset\mathbb{D},$ with endpoints $z,\tilde{z},$ satisfying
	\[\ell(H_n(\gamma_n))\leq\rho |H_n(z)-H_n(\tilde{z})|.\]
	It follows from a result of Gehring and Hayman \cite{Gehring Haymann} (see also \cite[p. 88]{P92}) that there exists an absolute constant $C$ such that
	\[\ell(H_n(S))\leq C |H_n(z)-H_n(\tilde{z})|,\quad\text{ for all }\quad n, \]
	where $S$ is the hyperbolic segment with endpoints $z,\tilde{z}.$ By letting $n\to\infty,$ we conclude that
	\[\ell(H(S))\leq C |H(z)-H(\tilde{z})|=C |w-\tilde{w}|,\]
	whence $H(\mathbb{D})$ is a $C-$linearly connected domain. This contradicts the fact that $H$ is a rotation of the Koebe function $k(z)=z/(1-z)^2,$ which is a consequence of
	\[\left| \frac{H_n''(0)}{2}\right|=|A_f(z_n)|\to 2\quad \text{ as }\quad n\to\infty.\]
	Thus, we can conclude that $\left\| A_f\right\| <2.$
\end{proof}
We finish with some remarks (see Proposition\,\ref{prop case equality} and subsequent remark) about a distortion theorem for harmonic mappings with finite order, which can be found, with some changes in its presentation, in \cite{old and new order ALIF harmonic}. See also \cite{Liu ponnusamy,Schaubroeck bounds jacobian,Sheil small} for related results. We will present the proof since its argument will be used to analyze the case when there is equality in \eqref{desighiper} and the proof of the part ``only if'' is slightly different to that of \cite{old and new order ALIF harmonic}.
\begin{theorem}\label{thm distortion}
Let $f=h+\overline{g}$ be a sense-preserving harmonic mapping in $\mathbb{D}$ and $\alpha \geq 0.$ Then $\left\vert A_{f}\left(z\right) \right\vert \leq \alpha$ for all $z\in \mathbb{D}$ if and only if 
\begin{equation}\label{desighiper}
		\exp \left( -2\alpha \rho\left( z_{0},z_{1}\right) \right) \leq \frac{%
			\left( 1-\left\vert z_{1}\right\vert ^{2}\right) J_f^{1/2}\left( z_{1}\right) }{%
			\left( 1-\left\vert z_{0}\right\vert ^{2}\right) J_f^{1/2}\left( z_{0}\right) }%
		\leq \exp \left( 2\alpha \rho\left( z_{0},z_{1}\right) \right),
\end{equation}
for all $z_{0},z_{1}\in \mathbb{D}.$ If equality holds in any of these inequalities for $z_0,z_1\in\mathbb{D}$, $z_0 \neq z_1$, then $\left\vert A_{f}\left(z\right) \right\vert =\alpha ,$ for all $z$ in the hyperbolic segment $S:=S(z_0,z_1)$ with extremes $z_0$ and $z_1$ and we get equality in the corresponding side of (\ref{desighiper}) for all $\widehat{z_0}, \widehat{z_1}$ in $S$. 
\end{theorem}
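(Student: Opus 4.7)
The plan is to recast the theorem as a statement about the real-valued function
\[
u(z) \;:=\; \log\bigl\{(1-|z|^{2})J_f^{1/2}(z)\bigr\}, \qquad z\in\mathbb{D},
\]
which is well defined and smooth since $f$ is sense-preserving ($J_f>0$). By the differential identity \eqref{diferential form of A_f} one has $(1-|z|^{2})u_z(z) = A_f(z)$, and since $u$ is real we have $du = 2\operatorname{Re}(u_z\,dz)$. The entire proof will hinge on integrating this one-form along hyperbolic geodesics.

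For the forward implication, assuming $|A_f|\leq \alpha$, I would parametrize the hyperbolic geodesic $\gamma:[0,1]\to\mathbb{D}$ from $z_0$ to $z_1$ and estimate
\[
|u(z_1) - u(z_0)| \;\leq\; \int_0^1 2|u_z(\gamma(t))|\,|\gamma'(t)|\,dt \;=\; \int_0^1 \frac{2|A_f(\gamma(t))|}{1-|\gamma(t)|^{2}}\,|\gamma'(t)|\,dt \;\leq\; 2\alpha\,\rho(z_0,z_1).
\]
Exponentiating the two-sided form of this inequality yields \eqref{desighiper}.

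For the converse, I fix $z\in\mathbb{D}$; if $u_z(z)=0$ there is nothing to show, and otherwise I choose the unimodular direction $e^{i\theta}=\overline{u_z(z)}/|u_z(z)|$ so that $u_z(z)e^{i\theta}\geq 0$. Applying \eqref{desighiper} to $z_0=z$ and $z_1=z+\varepsilon e^{i\theta}$, taking logarithms, dividing by $\varepsilon$, and letting $\varepsilon\to 0^{+}$, the left-hand side tends to $2|u_z(z)|$ while $\rho(z_0,z_1)/\varepsilon\to 1/(1-|z|^{2})$, giving $|A_f(z)|=(1-|z|^{2})|u_z(z)|\leq \alpha$.

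The delicate part, and the one I expect to be the main obstacle, is the equality statement. Keeping $\gamma$ to be the hyperbolic geodesic $S=S(z_0,z_1)$, the forward estimate decomposes into two pointwise steps along $S$: (i) the triangle inequality $2\operatorname{Re}(u_z\gamma')\leq 2|u_z||\gamma'|$, and (ii) the hypothesis $|u_z|\leq \alpha/(1-|\gamma|^{2})$. If equality holds on one side of \eqref{desighiper} for $z_0\neq z_1$, the entire chain must be saturated at every $t$. Step (ii) forces $|A_f(z)|=\alpha$ for every $z\in S$, which is the first claim, and step (i) forces $u_z(\gamma(t))\gamma'(t)$ to be real and of constant sign (determined by which side of \eqref{desighiper} is saturated). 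Now for any $\widehat{z_0},\widehat{z_1}\in S$, the hyperbolic segment between them is a sub-arc of $S$, and integrating $du/dt = 2\operatorname{Re}(u_z\gamma') = 2|u_z||\gamma'|$ along this sub-arc -- with both pointwise equalities already in hand -- produces $u(\widehat{z_1})-u(\widehat{z_0})=2\alpha\,\rho(\widehat{z_0},\widehat{z_1})$, which after exponentiation is exactly the same side of \eqref{desighiper} as an equality. The remaining case is obtained by swapping the roles of $z_0$ and $z_1$.
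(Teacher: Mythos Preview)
Your proof is correct and follows essentially the same route as the paper: both arguments introduce $u=\log\bigl((1-|z|^2)J_f^{1/2}\bigr)$, use the identity $A_f=(1-|z|^2)u_z$, integrate $du=2\operatorname{Re}(u_z\,dz)$ along the hyperbolic geodesic for the forward implication, differentiate in the direction of maximal growth for the converse, and argue the equality case by saturating the integrand pointwise along $S$. The only cosmetic differences are that the paper parametrizes by hyperbolic arc length (so $z'=(1-|z|^2)e^{i\theta(s)}$ and the integrand becomes $2\operatorname{Re}\{A_f\,e^{i\theta(s)}\}$) and phrases the converse via $|\nabla u|$ rather than $2|u_z|$.
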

\begin{proof}
Let $\gamma $ be the hyperbolic segment joining $z_{0}$	and $z_{1}$ in $\mathbb{D}$ and let $z=z\left( s\right) ,$ $0\leq s\leq L:=l_{h}\left(\gamma\right),$ be a parametrization of $\gamma $ by hyperbolic arc length. Then $z^{\prime }=\left( 1-\left\vert z\right\vert
	^{2}\right) e^{i\theta \left( s\right) },$ from which we get	
\begin{align*} 
		\frac{d}{ds}\log \left( 1-\left\vert z\right\vert ^{2}\right) J_f^{1/2} &  = -\frac{2\mathrm{Re} \left\lbrace z'\overline{z}\right\rbrace }{1-\left\vert
			z\right\vert ^{2}} + \frac{1}{2}\frac{2\mathrm{Re} \left\lbrace \left( h''\overline{h'}+g''\overline{g'}\right)z' \right\rbrace }{J_f}
		\\
		& = 2\mathrm{Re} \left\{ -\overline{z}\frac{ z'}{1-\left\vert z\right\vert ^{2}} + \frac{1}{2}\frac{ \left( h''\overline{h'}+g''\overline{g'}\right)z' }{J_f}\right\rbrace \\
		& =2\mathrm{Re} \left\lbrace\left(-\overline{z}+\frac{1-|z|^2}{2}P_f \right)e^{i\theta(s)}  \right\rbrace
\end{align*}
and, in consequence,
\begin{equation}\label{TH1}
\frac{d}{ds}\log \left( 1-\left\vert z\right\vert ^{2}\right) J_f^{1/2}(z) =  2\mathrm{Re} \left\{ A_{f}\left( z\right) e^{i\theta(s)}\right\}.
\end{equation}%
It follows by hypothesis that 
\begin{equation*}
		-2\alpha \leq \frac{d}{ds}\log \left( 1-\left\vert z\right\vert ^{2}\right)
		J_f^{1/2}\left( z\right) \leq 2\alpha,
\end{equation*}
whence we get, after integration with respect to $s$ in the interval $\left[ 0,L\right],$
\begin{equation*}
		-2\alpha L\leq \log \frac{\left( 1-\left\vert z_{1}\right\vert ^{2}\right)
			J_f^{1/2}\left( z_{1}\right) }{\left( 1-\left\vert z_{0}\right\vert ^{2}\right)
			J_f^{1/2}\left( z_{0}\right) }\leq 2\alpha L,
\end{equation*}
which is equivalent to (\ref{desighiper}).
	
For the converse, we set $u(z)=\log \left( \left(1-|z|^2\right) J_f^{1/2}(z) \right) $ for all $z\in \mathbb{D}.$ For $z_{0}\in \mathbb{D}$ arbitrary, we obtain from (\ref{desighiper})
\begin{equation*}
		\left\vert u(z) -u( z_{0}) \right\vert =\left\vert
	\log \frac{\left( 1-|z|^{2}\right)J_f^{1/2}(z)  }{\left( 1-|z_{0}|^{2}\right)J_f^{1/2}(z_{0})  }\right\vert \leq 2\alpha \rho\left( z_{0},z\right)
\end{equation*}
and therefore, 
\begin{equation*}
		\dfrac{\left\vert u\left( z\right) -u\left( z_{0}\right) \right\vert }{%
			\left\vert z-z_{0}\right\vert }\leq 2\alpha \dfrac{\rho\left(
			z_{0},z\right) }{\left\vert z-z_{0}\right\vert }.
\end{equation*}%
Now let $z$ approach $z_{0}$ in the direction of maximum growth of $u$ at $z_{0}$, to get
	\begin{equation*}
		2\left|\frac{\partial u}{\partial z}(z_0) \right|=\left\vert \nabla
		u\left( z_{0}\right) \right\vert=\lim_{z\rightarrow z_{0}}\dfrac{\left\vert u\left( z\right) -u\left(
			z_{0}\right) \right\vert }{\left\vert z-z_{0}\right\vert }\leq 2\alpha \lambda _{\mathbb{D}}\left(
		z_{0}\right), 
	\end{equation*}
where $\lambda _{\mathbb{D}}$ denotes the Poincar\'e density of the unit disk. It follows from here and from \eqref{diferential form of A_f} that $|A_{f}(z_0)|\leq \alpha.$

Next we consider the case of equality. Without loss of generality, we assume that there is equality on the right side of (\ref{desighiper}) in $z_0,z_1$, $z_0 \neq z_1$. That is
	\[\log\frac{%
		\left( 1-\left\vert z_{1}\right\vert ^{2}\right) J_f^{1/2}\left( z_{1}\right) }{%
		\left( 1-\left\vert z_{0}\right\vert ^{2}\right) J_f^{1/2}\left( z_{0}\right) }%
	=  2\alpha L, 
	\]
which implies
\[	\int_0^L \frac{d}{ds}\log \left( 1-\left\vert z(s)\right\vert ^{2}\right) J_f^{1/2}(z(s)) ds=  2\alpha L. \]
From here and 
\[\frac{d}{ds}\log \left( 1-\left\vert z(s)\right\vert ^{2}\right) J_f^{1/2}(z(s))\leq 2\alpha , \;\;\; \text{for all}  \;z\in S, \]
we obtain from (\ref{TH1}) that 
\begin{equation}\label{TH1.1}
2\mathrm{Re} \left\{ A_{f}\left( z\right) e^{i\theta(s)}\right\}=\frac{d}{ds}\log \left( 1-\left\vert z(s)\right\vert ^{2}\right) J_f^{1/2}(z(s)) = 2\alpha , \;\;\; \text{for all}  \;z\in S.
\end{equation}
It follows that $|A_f(z)|=\alpha$ for all $z\in S.$ The same integration argument just given allows us to conclude from (\ref{TH1.1}) that
\[\log\frac{%
	\left( 1-\left\vert \widehat{z_1}\right\vert ^{2}\right) J_f^{1/2}\left(\widehat{z_1}\right) }{%
	\left( 1-\left\vert \widehat{z_0}\right\vert ^{2}\right) J_f^{1/2}\left( \widehat{z_0}\right) }%
=  2\alpha \rho(\widehat{z_0}, \widehat{z_1}), 
\]
for all $\widehat{z_0}, \widehat{z_1}$ in $S$.
\end{proof}
\begin{remark}\label{remark 2}
If we suppose $z_0=0$ then the inequality (\ref{desighiper}) becomes
\begin{equation}\label{J_f}
\frac{\left(1-|z|\right)^{2\alpha-2}}{\left(1+|z|\right)^{2\alpha+2}}\leq\frac{J_{f}(z)}{J_{f}(0)} \leq\frac{\left(1+|z|\right)^{2\alpha-2}}{\left(1-|z|\right)^{2\alpha+2}},
\end{equation}
for all $z\in \mathbb{D},$ which coincides with (2.1) in \cite{old and new order ALIF harmonic}. In that same paper, the authors mention that the equality is achieved by a certain affine mapping $f_{\alpha}$ of the function
\[k_{\alpha}(z)=\frac{1}{2\alpha}\left[ \left( \frac{1+z}{1-z}\right)^{\alpha}-1 \right],\qquad z\in\mathbb{D},\]
which follows by direct calculations.
\end{remark}
We will obtain a condition under which a harmonic function, with dilatation having certain property, satisfies the equality in one of the inequalities of \eqref{J_f}. As a particular case, it is shown that $f_{\alpha}$ is essentially the only harmonic mapping with constant dilatation satisfying the equality in one of the inequalities of \eqref{J_f}.

\begin{proposition}\label{prop case equality}
Let $f=h+\bar{g}$ be a sense-preserving harmonic mapping in $\mathbb{D},$ normalized under the condition $J_f(0)=1,$ and we suppose that $|A_f(z)|\leq \alpha$ for all $z\in\mathbb{D}.$\\

(i)  If there exists $z_0=re^{i\theta}\neq 0$ satisfying the equality in the right hand side of (\ref{J_f}) and either $\omega(te^{i\theta})\in\mathbb{R},$ for $0\leq t<1,$ or $\omega$ is constant, then $h$ satisfies the equation
\begin{equation}\label{h}
\frac{h''(z)}{h'(z)}=\frac{\omega(z)\omega'(z)}{1-\omega^2(z)}+2e^{-i\theta}\frac{\alpha+e^{-i\theta}z}{1-(e^{-i\theta}z)^2}, \qquad z\in \mathbb{D}.
\end{equation}

(ii)  If there exists $z_0=re^{i\theta}\neq 0$ satisfying the equality in the left hand side of (\ref{J_f}) and either $\omega(te^{i\theta})\in\mathbb{R},$ for $0\leq t<1,$ or $\omega$ is constant, then $h$ satisfies the equation
\begin{equation}\label{eq left h}
\frac{h''(z)}{h'(z)}=\frac{\omega(z)\omega'(z)}{1-\omega^2(z)}-2e^{-i\theta}\frac{\alpha-e^{-i\theta}z}{1-(e^{-i\theta}z)^2}, \qquad z\in \mathbb{D}.
\end{equation}
\end{proposition}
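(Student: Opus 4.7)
The strategy is to apply the equality case of Theorem \ref{thm distortion} with endpoints $0$ and the given point $z_0=re^{i\theta}$ in order to pin down $A_f$ along the hyperbolic geodesic between them; then to convert that pointwise identity into an equation for $h''/h'$ on the geodesic; and finally to extend it to all of $\mathbb{D}$ by the identity principle. I describe part (i); part (ii) is the same computation with a change of sign.

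\textbf{Step 1 (identity for $A_f$ on the radial segment).} The hyperbolic geodesic from $0$ to $z_0=re^{i\theta}$ is the Euclidean radial segment $S=\{te^{i\theta}:0\le t\le r\}$, and using the arclength parametrization $z(s)=\tanh(s)e^{i\theta}$ one checks $z'(s)=(1-|z(s)|^2)e^{i\theta}$, so the unimodular factor $e^{i\theta(s)}$ from the proof of Theorem \ref{thm distortion} is the constant $e^{i\theta}$ along $S$. Equality on the right of (\ref{J_f}) is equivalent to equality on the right of (\ref{desighiper}) for the endpoints $0$ and $re^{i\theta}$, so relation (\ref{TH1.1}) yields $2\operatorname{Re}\{A_f(z)e^{i\theta}\}=2\alpha$ on $S$, while Theorem \ref{thm distortion} itself gives $|A_f(z)|=\alpha$ on $S$. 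Together these force
\[ A_f(te^{i\theta})=\alpha e^{-i\theta},\qquad 0\le t\le r. \]

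\textbf{Step 2 (conversion to an equation for $h''/h'$).} Expanding $A_f$ by (\ref{A_f harmonic 2}), the previous identity reads, for $z=te^{i\theta}\in S$,
\[ \alpha e^{-i\theta}+te^{-i\theta}=\frac{1-t^2}{2}\left(\frac{h''(te^{i\theta})}{h'(te^{i\theta})}-\frac{\overline{\omega(te^{i\theta})}\,\omega'(te^{i\theta})}{1-|\omega(te^{i\theta})|^2}\right). \]
The hypothesis that either $\omega(te^{i\theta})\in\mathbb{R}$ along the ray, or $\omega$ is constant (so that $\omega'\equiv 0$ trivializes the offending term), lets us replace $\overline{\omega(te^{i\theta})}$ by $\omega(te^{i\theta})$ and $|\omega(te^{i\theta})|^2$ by $\omega(te^{i\theta})^2$ on $S$. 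Solving for $h''/h'$ and substituting $t=e^{-i\theta}z$ (so that $1-t^2=1-(e^{-i\theta}z)^2$) yields exactly equation (\ref{h}) evaluated on $S$.

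\textbf{Step 3 (extension to $\mathbb{D}$ by the identity principle).} Both sides of (\ref{h}) are holomorphic on $\mathbb{D}$: the left-hand side because $h'$ does not vanish; the factor $\omega\omega'/(1-\omega^2)$ because $|\omega|<1$ forces $1-\omega^2\neq 0$; and the rational expression $(\alpha+e^{-i\theta}z)/(1-(e^{-i\theta}z)^2)$ because its only singularities $\pm e^{i\theta}$ lie on $\partial\mathbb{D}$. Since these two holomorphic functions of $z$ agree on the arc $S$, which has interior accumulation points, the identity principle yields (\ref{h}) throughout $\mathbb{D}$, proving (i). For (ii) the only change is that equality on the left of (\ref{J_f}) makes (\ref{TH1.1}) read $2\operatorname{Re}\{A_f(z)e^{i\theta}\}=-2\alpha$ on $S$, hence $A_f(te^{i\theta})=-\alpha e^{-i\theta}$; running the same computation through Step 2 produces exactly the sign change in front of $\alpha$ in (\ref{eq left h}). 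The only non-routine point in the whole argument is the first step, namely matching $e^{i\theta(s)}$ with the constant $e^{i\theta}$ along the radial geodesic and combining $|A_f|=\alpha$ with the real-part identity to obtain the pointwise value of $A_f$; the rest is straightforward bookkeeping and analytic continuation.
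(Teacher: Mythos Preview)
Your proof is correct and follows essentially the same route as the paper's: use the equality analysis of Theorem~\ref{thm distortion} (equation~(\ref{TH1.1})) on the radial geodesic to force $A_f=\pm\alpha e^{-i\theta}$ there, invoke the hypothesis on $\omega$ to turn the non-holomorphic factor $\overline{\omega}\,\omega'/(1-|\omega|^2)$ into the holomorphic $\omega\omega'/(1-\omega^2)$, and then extend by analytic continuation. The only cosmetic difference is that the paper first rotates to reduce to $\theta=0$ and works on $[0,r]$, whereas you keep $\theta$ general throughout; your version is in fact slightly cleaner, since the paper's reduction step stops after deriving the $\theta=0$ identity and leaves the passage back to general $\theta$ implicit.
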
 
\begin{proof}
Since the proofs of \eqref{h} and \eqref{eq left h} are similar, we only prove \eqref{h}. By defining $\varphi(z)=f(e^{i\theta}z),$ we have $\left\vert A_{\varphi}\left(z\right) \right\vert \leq \alpha,$ $\omega_{\varphi}(z)=\omega_f(e^{i\theta}z),$ and $J_{\varphi}(z)=J_{f}(e^{i\theta}z)$ for all $z\in \mathbb{D}$. Then, without loss of generality, we can assume that
\[J_{f}(r)=\frac{\left(1+r\right)^{2\alpha-2}}{\left(1-r\right)^{2\alpha+2}}\]
and either $\omega(t)\in\mathbb{R},$ for $0\leq t<r,$ or $\omega$ is constant. So, proceeding as in the proof of \eqref{TH1.1}, we find that $\text{Re}\left\lbrace A_f(t) \right\rbrace =\alpha,$ for all $0\leq t\leq r.$ Consequently, the hypothesis $\left\| A_f \right\|\leq \alpha $ implies $A_f(t)=\alpha,$ for all $0\leq t\leq r,$ or equivalently,
\[ A_h(t)-\frac{1-t^2}{2}\frac{\overline{\omega(t)}\omega'(t)}{1-|\omega(t)|^2}=\alpha ,\qquad 0\leq t\leq r. \]
Then it follows from the conditions on $\omega$ and \eqref{Af1} that
\[ \frac{1-t^2}{2}\frac{h''(t)}{h'(t)}-t-\frac{1-t^2}{2}\frac{\omega(t)\omega'(t)}{1-\omega^2(t)}=\alpha ,\qquad 0\leq t\leq r.\]
Hence, by analytic continuation we obtain that
\[ \frac{1-z^2}{2}\frac{h''(z)}{h'(z)}-z-\frac{1-z^2}{2}\frac{\omega(z)\omega'(z)}{1-\omega^2(z)}=\alpha ,\qquad z\in\mathbb{D},
\]
which proves $(i).$
\end{proof}
\begin{remark}
The above reasoning leads us to the following conclusions:\\

(a) If $\omega$ is constant and $f=h+\overline{g}$ satisfies equality on the right hand side of (\ref{J_f}) at some $z_0\in(0,1),$ then by (\ref{h}), 
	\[\frac{h''(z)}{h'(z)}=2\frac{\alpha+z}{1-z^2}=2\left[ \frac{\alpha+1}{2}\frac{1}{1-z}+\frac{\alpha-1}{2}\frac{1}{1+z}\right], \]
from where, assuming that $h'(0)=1$,
	\[\log h'(z)=(\alpha-1)\log (1+z)-(\alpha+1)\log (1-z).\]
Therefore,
\[h'(z)=\frac{(1+z)^{\alpha-1}}{(1-z)^{\alpha+1}}.\]
Now, if we suppose that $h(0)=0,$ we easily see that 
	\[h(z)=\frac{1}{2\alpha}\left[\left( \frac{1+z}{1-z}\right)^\alpha-1 \right]=:k_\alpha(z).\]
Thus, the harmonic mapping $f=h+\overline{g},$ with constant dilatation and normalized under the conditions $h(0)=g(0)=0$ and $h'(0)=1,$ satisfying the equality on the right hand side of (\ref{J_f}) at some $z_0\in(0,1),$ is given by $f(z)=k_\alpha(z)+\overline{\omega k_\alpha(z)}.$ Moreover, we can see in general that if $f$ satisfies equality on the right hand side of (\ref{J_f}) at some $z_0\in \mathbb{D}\setminus\left\lbrace 0\right\rbrace $, then $f(z)=k_{\alpha,\theta}(z)+\overline{\omega k_{\alpha,\theta}(z)},$ where $k_{\alpha,\theta}$ is some rotation of $k_\alpha$, with $\theta$ depending on $z_0.$\\

(b) The same conclusion as in (a) is obtained if we assume that $f,$ with the above properties, satisfies equality on the left hand side of (\ref{J_f}) at some $z_0\in\mathbb{D}.$\\

(c) Let $L$ be as in Example\,\ref{example L}. Then, a straightforward calculation gives
\[\omega_L(z):=\omega(z)=-z\qquad\text{and}\qquad  J_L(z)=\frac{1-|z|^2}{|1-z|^6},\quad z\in\mathbb{D}.\]
Therefore, with $\alpha=3/2,$ $L$ satisfies equality on the left hand side of (\ref{J_f}), for all $z\in(-1,0).$ We prove that if a sense-preserving harmonic mapping $f=h+\overline{g},$ with $h(0)=g(0)=0,$ $h'(0)=1,$ and $\omega(z)=-z,$ satisfies equality on the left hand side of (\ref{J_f}) at some $z_0\in(-1,0),$ then $f=L.$ Indeed, if such $f$ exists, we must have by \eqref{eq left h} that
\[\frac{h''(z)}{h'(z)}=\frac{z}{1-z^2}+2\frac{\alpha+z}{1-z^2},\]
where we have used the condition $\theta=\pi.$ From here, if $\alpha=3/2,$ we obtain by integration and $g'=\omega h'$ that 
\[h'(z)=\frac{1}{(1-z)^3}\qquad\text{ and }\qquad g'(z)=-\frac{z}{(1-z)^3},\]
whence $f(z)=L(z)$ for all $z\in \mathbb{D}.$
\end{remark}

\end{document}